\documentclass[a4paper,11pt]{amsart}

\usepackage[utf8]{inputenc}
\usepackage[all,cmtip]{xy}
\usepackage[english]{babel}
\usepackage{mathtools}
\usepackage{graphicx}
\usepackage{multicol}

\usepackage{amsmath,amssymb,amsthm,amsrefs,mathrsfs}
\usepackage{xfrac}
\usepackage{pifont}


\usepackage{array,tabularx}
\usepackage{xcolor}

\usepackage{tikz,tikz-cd}
\usetikzlibrary{positioning, shapes.geometric}
\usetikzlibrary{arrows,arrows.meta}
\usetikzlibrary{calc}
\tikzcdset{arrow style=tikz, diagrams={>={Computer Modern Rightarrow[width=5.5pt,length=3pt]}}}

\usepackage{enumitem}
\usepackage{cleveref}

\usepackage[retainorgcmds]{IEEEtrantools}

\theoremstyle{plain}
\newtheorem{thm}{Theorem}[section] 
\newtheorem*{thm*}{Theorem}
\newtheorem*{mainthm}{Main Theorem}
\newtheorem{prop}[thm]{Proposition}
\newtheorem{cor}[thm]{Corollary}
\newtheorem{lem}[thm]{Lemma}

\newtheorem*{conj}{Conjecture}

\theoremstyle{definition}
\newtheorem{defn}[thm]{Definition}
\newtheorem{exa}[thm]{Example}
\newtheorem{rem}[thm]{Remark}

\newcommand*{\myproofname}{Proof of Theorem \ref{1thm:inertness}}
\newenvironment{thmproof}[1][\myproofname]{\begin{proof}[#1]}{\end{proof}}

\newcommand*{\myproofnames}{Proof of Proposition \ref{prop:pdhmlgy}}

\newcommand{\N}{\mathbb{N}}
\newcommand{\Z}{\mathbb{Z}}
\newcommand{\Q}{\mathbb{Q}}

\newcommand{\calL}{\mathcal{L}}


\usepackage{xparse}
\makeatletter
\RenewDocumentCommand{\title}{om}{%
  \IfNoValueTF{#1}
     {\gdef\shorttitle{Connected Sums and the Vigu\'e-Poirrier Conjecture}}
     {\gdef\shorttitle{#1}}%
  \gdef\@title{#2}%
}
\makeatother

\usepackage{libertine}
\usepackage[libertine]{newtxmath}
\renewcommand{\mathbb}{\varmathbb}

\title{Loop Space Decompositions of Connected Sums and Applications to the Vigu\'e-Poirrier Conjecture}
\author{Sebastian Chenery}
\address{Mathematical Sciences, University of Southampton, Southampton SO17 1BJ, United Kingdom}
\email{s.chenery@soton.ac.uk}

\subjclass[2020]{Primary 55P35; Secondary 55P62}
\keywords{Connected sums, loop spaces, exponential growth, free loop space}

\begin{document}

\maketitle

\begin{abstract}
Recent work of Beben and Theriault on decomposing based loop spaces of highly connected Poincar\'e Duality complexes has yielded new methods for analysing the homotopy theory of manifolds. In this paper we will expand upon these methods, which we will then apply to give new examples supporting a long standing question of rational homotopy theory: the Vigu\'e-Poirrier Conjecture.
\end{abstract}

\section{Introduction}

Let \(X\) be a simply connected space and let \(\calL X=Map(S^1,X)\) be its \textit{free loop space}. That is to say, \(\calL X\) is the space of continuous (not necessarily pointed) maps from \(S^1\) to \(X\). We write \(\Omega X\) for the subspace of pointed maps, called the \textit{based loop space} of \(X\). Such a space \(X\) is called \textit{rationally elliptic} if \(dim(\pi_*(X)\otimes\Q)<\infty\), and called\textit{ rationally hyperbolic} otherwise \cite{fht}. This paper uses techniques of homotopy theory to give new evidence supporting the Vigu\'e-Poirrier Conjecture, which concerns the rational hyperbolicity of \(X\). In particular, we will focus on the situation when \(\Omega X\) has the homotopy type of the based loop space of a connected sum of Poincar\'e Duality complexes, in the sense of \cite{wall1967poincare}. The Conjecture, stated below, was first given in \cite{vp}. 

\begin{conj}[Vigu\'e-Poirrier]
If \(X\) is rationally hyperbolic, then \(H_*(\calL X;\Q)\) grows exponentially.
\end{conj}

It was shown in the same paper that the Conjecture holds when \(X\) is a finite wedge sum of at least two spheres, i.e.~that such an \(X\) is an example of a space that is rationally hyperbolic and has the property that \(H_*(\calL X;\Q)\) grows exponentially. The Vigu\'e-Poirrier Conjecture may also be viewed as a development of another conjecture, due to Gromov \cite{gromov}, that when \(X\) is a closed manifold then \(H_*(\calL X;\Q)\) `almost always' grows exponentially. This has profound implications in Riemannian geometry, in which one may give a lower bound for the number of geometrically distinct closed geodesics on a simply connected closed Riemannian manifold \(M\) using the rate of growth in the dimension of \(H_*(\calL M;\Q)\). Indeed, if the Betti numbers of \(\calL X\) are unbounded, then \(X\) has infinitely many distinct closed geodesics in any Riemannian metric \cite{gromollmeyer}*{Theorem 4}.

The Vigu\'e-Poirrier Conjecture has also been shown to hold for several other classes of spaces. Notably for us, Lambrechts proved that the Conjectures holds for non-trivial connected sums of closed manifolds which are not monogenic in cohomology \cite{lambrechtsbetti}. This result in particular was strenghtened to the case when only one of the connect-summands was required to not be monogenic \cite{fhtloopii}*{Theorem 1.4}, and these methods can be expanded to include Poincar\'e Duality complexes which satisfy these assumptions. 

This brings us to our application, which broadens the class of spaces for which the Conjecture holds to Poincar\'e Duality complexes with the loop space homotopy type of certain connected sums; in particular, such a homotopy equivalence need not hold before taking loop spaces. The statement of the Main Theorem below uses the notions of \textit{inert} maps and \textit{good exponential growth} -- these are defined precisely in Definitions \ref{def:inert} and \ref{def:geg}, respectively. 

\begin{mainthm}\label{mainthm:vp}
    Let $n>3$ and let $M$ be a $n$-dimensional Poincar\'e Duality complex, and suppose that there exist \(n\)-dimensional Poincar\'e Duality complexes \(N\) and \(P\) such that \(\Omega M\simeq\Omega (N\#P)\). If \(P\) is rationally elliptic and the attaching map of the top-cell of \(P\) is inert, and if \(\calL(N\#P)\) has good exponential growth, then \(M\) satisfies the Vigu\'e-Poirrier Conjecture. \qed
\end{mainthm}

Good exponential growth of \(\calL(N\#P)\) occurs (for example) in cases when \(N\) is rationally hyperbolic, or when the cohomological condition of Lambrechts is satisfied for \(N\#P\). 

The Main Theorem arises from a homotopy theoretic analysis of spaces that have the homotopy type of connected sums of Poincar\'e Duality after taking based loops, but not before. It therefore shows that the Conjecture holds not just for connected sums which satisfy Lambrechts' condition, but also for spaces which share their based loop homotopy type. Thus we broaden the context for which the Vigu\'e-Poirrier Conjecture is known to hold. Such spaces exist in abundance: we produce an infinite family of examples in Section \ref{sec:conex} of this paper. In this way we continue a trend of recent work (see, for example, \cite{huangtheriault}) wherein results of rational homotopy theory are reformulated and generalised using \textit{integral} methods. 

The structure of this paper is as follows. Section \ref{sec:ther} establishes some modifications to a construction of Theriault \cite{t20}*{Section 8} in order to prove Theorem \ref{1thm:inertness}, which underpins much of what follows and provides a basis for how we will use inert maps. This is employed in Section \ref{sec:inert} to give an important fact in Lemma \ref{lem:inertness}, and then to give further results in the context of the homotopy theory of Poincar\'e Duality complexes, including a new proof of \cite{t20}*{Theorem 9.1(b)-(c)}. The methods we use for this analysis have been the subject of intense interest in recent times, and have been given further study in works such as \cite{KishimotoMinowa_BT}.

A key theorem of this paper is Theorem \ref{thm:PDconn}, which gives a general framework for our analysis. We follow this with Section \ref{sec:conex}, in which we give a family of examples of Poincar\'e Duality complexes which have the homotopy type of a connected sum after looping, but not before. We then conclude with Section \ref{sec:vp} by using Theorem \ref{thm:PDconn} to expand the class of spaces for which the Vigu\'e-Poirrier Conjecture is known to hold, which we give in Corollary \ref{cor:vp} and Theorem \ref{thm:vp}.


\section{Adapting a Construction of Theriault}\label{sec:ther}

We start by making some adjustments to a construction of Theriault from \cite{t20}*{Section 8} in order to prove a more general result, which forms the basis of the theory that follows. To begin, we establish some notation. Unless otherwise stated, all spaces are assumed to be simply connected. Moreover, given a wedge of spaces \(\bigvee_{i=1}^n X_i\), let \(p_j:\bigvee_{i=1}^n X_i\rightarrow X_j\) denote the pinch map to the \(j^{th}\) summand (note that the inclusion of the \(j^{th}\) summand is a right homotopy inverse for \(p_j\)). We also give the following definition.

\begin{defn}\label{def:inert}
    For a homotopy cofibration \(A\xrightarrow{f}B\xrightarrow{j}C\) the map \(f\) is called \textit{inert} if \(\Omega j\) has a right homotopy inverse.
\end{defn} 

Our focus in this section will be on homotopy cofibrations of the form \[\Sigma A \xrightarrow{f} X\vee Y\xrightarrow{q} C\] and the key to our considerations will be the following homotopy commutative diagram of homotopy cofibrations \begin{equation}\label{1dgm:setup1}
    \begin{tikzcd}[row sep=1.5em, column sep = 1.5em]
        && Y \arrow[rr, equal] \arrow[dd, hook] && Y \arrow[dd] \\
        &&&&& \\
        \Sigma A \arrow[rr, "f"] \arrow[dd, equal] && X\vee Y  \arrow[rr, "q"] \arrow[dd, "p_1"] && C \arrow[dd, "\varphi"] \\
        &&&&& \\
        \Sigma A \arrow[rr, "p_1\circ f"] && X \arrow[rr, "j"] && M
    \end{tikzcd}
\end{equation}
where the bottom-right square is a homotopy pushout. Our goal is to prove the following theorem. 

\begin{thm}\label{1thm:inertness}
Consider Diagram (\ref{1dgm:setup1}). If the map \(\Omega j\) has a right homotopy inverse, then so do \(\Omega \varphi\) and \(\Omega q\). In particular, if the composite \(p_1\circ f\) is inert, then so is \(f\).
\end{thm}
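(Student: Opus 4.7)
The plan is to produce right homotopy inverses for $\Omega\varphi$ and $\Omega q$ in turn: the first is immediate from the pushout commutativity, while the second will require a more delicate use of the loop multiplication on $\Omega(X\vee Y)$ together with a decomposition of $\Omega C$ coming from the right-hand column of (\ref{1dgm:setup1}).

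For $\Omega\varphi$, I take $s\colon \Omega M\to \Omega X$ to be the hypothesised right inverse of $\Omega j$ and write $i_1\colon X\hookrightarrow X\vee Y$ for the inclusion of the first summand, so that $p_1\circ i_1\simeq \mathrm{id}_X$. Reading off the bottom-right pushout in (\ref{1dgm:setup1}) gives $\varphi\circ q\circ i_1 = j\circ p_1\circ i_1\simeq j$, and so the composite
\[
t := \Omega q\circ \Omega i_1\circ s\colon \Omega M\to \Omega C
\]
satisfies $\Omega\varphi\circ t\simeq \Omega j\circ s\simeq \mathrm{id}_{\Omega M}$; this is the required right inverse.

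For $\Omega q$, I let $i_2\colon Y\hookrightarrow X\vee Y$ denote the inclusion of the second summand, and consider
\[
\psi := \mu\circ\bigl((\Omega i_1\circ s)\times \Omega i_2\bigr)\colon \Omega M\times \Omega Y\longrightarrow \Omega(X\vee Y),
\]
where $\mu$ is the loop multiplication on $\Omega(X\vee Y)$. Since $\Omega q$ is an H-map, $\Omega q\circ\psi$ sends $(m,y)$ to $t(m)\cdot \Omega(q\circ i_2)(y)$ in $\Omega C$. The plan is then to show that $\Omega q\circ\psi$ admits a right homotopy inverse $\rho\colon \Omega C\to \Omega M\times \Omega Y$, from which $\sigma := \psi\circ\rho$ serves as the required right inverse of $\Omega q$.

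The main obstacle I expect is precisely the existence of $\rho$. The right-hand column of (\ref{1dgm:setup1}) is a homotopy cofibration $Y\to C\xrightarrow{\varphi}M$, and since $\Omega\varphi$ has now been split by $t$, one should be able to decompose $\Omega C$ as a twisted product of $\Omega M$ and $\Omega Y$ via a Ganea- or James-type argument driven by the coaction associated to this cofibration; extracting this decomposition in exactly the form required for $\rho$ is the technical heart of the proof, and I anticipate that adapting the construction of \cite{t20}*{Section 8} is what will supply it. The final clause of the theorem then follows at once: by Definition \ref{def:inert}, inertness of $p_1\circ f$ is the hypothesis that $\Omega j$ is split, and the above machinery produces a right inverse of $\Omega q$, i.e.~$f$ is inert.
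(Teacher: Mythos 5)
Your argument for $\Omega\varphi$ is correct and coincides with the paper's, with $\Omega i_1$ chosen as the explicit right homotopy inverse of $\Omega p_1$. The gap is in the argument for $\Omega q$, and it is substantive. First, the decomposition of $\Omega C$ obtained from the cofibration $Y\to C\xrightarrow{\varphi}M$ together with the splitting of $\Omega\varphi$ (via Corollary \ref{1cor:splitfib}) is $\Omega C\simeq\Omega M\times\Omega(\Omega M\ltimes Y)$, not a ``twisted product of $\Omega M$ and $\Omega Y$'': the homotopy fibre of $\varphi$ is $\Omega M\ltimes Y$, not $Y$. Since $\Omega M\ltimes Y\simeq Y\vee(\Omega M\wedge Y)$, the factor $\Omega(\Omega M\ltimes Y)$ is in general strictly larger than $\Omega Y$, so $\Omega C$ is not dominated by your domain $\Omega M\times\Omega Y$, and $\Omega q\circ\psi$ cannot admit a right homotopy inverse $\rho$. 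Your $\psi$ therefore has the wrong source.

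Second, and more fundamentally, the second coordinate of the map into $\Omega(X\vee Y)$ cannot be the naive inclusion $\Omega i_2$. What the paper actually constructs is a map $r'\colon\Omega M\ltimes Y\to X\vee Y$ by factoring a specific homotopy equivalence $\Omega M\ltimes Y\xrightarrow{\simeq}E'$ (the homotopy fibre of $\varphi$) through the homotopy fibre $E$ of $j\circ p_1$, and the right inverse of $\Omega q$ is then read off from the homotopy equivalence $e=\Omega q\circ\mu\circ(\lambda\times\Omega r')$. Producing $r$ with the required properties is the technical heart of the paper's proof: it needs the naturality of Theorem \ref{1thm:data} applied to the map $p_1\colon X\vee Y\to X$ of cofibrations, the observation (Lemma \ref{1lem:thetaleftinv}) that the left inverse of $\theta_f$ depends only on $p_1\circ f$, the comparison with the special case $C\simeq M\vee Y$ and Lemma \ref{1lem:alpha'rightinv}, and finally \cite{t20}*{Lemma 8.5} to upgrade $\alpha\circ r$ to an equivalence. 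You correctly sense that the missing ingredient should come from adapting \cite{t20}*{Section 8}, but the plan as written would not assemble into a proof, because the intermediate construction you propose is aimed at the wrong factor of $\Omega C$.
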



This result is a refinement of \cite{t20}*{Lemma 8.1}, albeit in a slightly different formulation - indeed, our method of proof is similar.  First, recall that for two path connected and based spaces $X$ and $Y$, the \textit{(left) half-smash} of $X$ and $Y$ is the quotient space \[X\ltimes Y=(X\times Y)/(X\times y_0)\] where $y_0$ denotes the basepoint of $Y$. We begin by stating one of the main theorems of \cite{t20}.

\begin{thm}[Theriault] \label{1thm:data}
Suppose there exists a homotopy commutative diagram
\[
    \begin{tikzcd}[row sep=1.5em, column sep = 1.5em]
        && E \arrow[rr, "\alpha"] \arrow[dd] && E' \arrow[dd] \\
        &&&&& \\
        \Sigma A \arrow[rr, "f"] && B \arrow[rr] \arrow[dd, "h"] && C \arrow[dd, "h'"] \\
        &&&&& \\
        && Z \arrow[rr, equal] && Z
    \end{tikzcd}
\]
where the middle and right columns are homotopy fibrations, the map \(\alpha\) is an induced map of fibres and the middle row is a homotopy cofibration. If $\Omega h$ has a right homotopy inverse, then there exists a homotopy cofibration \[\Omega Z\ltimes \Sigma A\xrightarrow{\theta}E\rightarrow E'\] for some map \(\theta\). \hfill $\square$
\end{thm}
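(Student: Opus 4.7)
The plan is to apply Theorem \ref{1thm:data} directly to Diagram \eqref{1dgm:setup1} using the assignments $B = X \vee Y$, $Z = M$, with middle column map $h = j \circ p_1$ and right column map $h' = \varphi$, and middle row the given homotopy cofibration $\Sigma A \xrightarrow{f} X \vee Y \xrightarrow{q} C$. Commutativity of the bottom-right pushout square of Diagram \eqref{1dgm:setup1} gives $\varphi \circ q = j \circ p_1 = h$, and defining $E$ and $E'$ as the homotopy fibers of $h$ and $h'$ respectively yields an induced map $\alpha \colon E \to E'$ completing the three-by-three setup demanded by Theorem \ref{1thm:data}.

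The key verification is that $\Omega h$ has a right homotopy inverse. Since $h = j \circ p_1$, we have $\Omega h = \Omega j \circ \Omega p_1$. The wedge summand inclusion $X \hookrightarrow X \vee Y$ is a right homotopy inverse of $p_1$ (as noted in the paragraph before Definition \ref{def:inert}), so $\Omega p_1$ has a right homotopy inverse, and composing with the right homotopy inverse of $\Omega j$ furnished by hypothesis gives one for $\Omega h$. Theorem \ref{1thm:data} then produces a homotopy cofibration
\[
\Omega M \ltimes \Sigma A \xrightarrow{\theta} E \xrightarrow{\alpha} E'.
\]

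Obtaining a right homotopy inverse for $\Omega \varphi$ is then immediate: if $s \colon \Omega M \to \Omega(X \vee Y)$ is any right homotopy inverse of $\Omega h = \Omega \varphi \circ \Omega q$, then $\Omega q \circ s$ is a right homotopy inverse of $\Omega \varphi$.

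The main obstacle is producing a right homotopy inverse for $\Omega q$. My plan is to use that the right homotopy inverses of $\Omega h$ and $\Omega \varphi$ induce compatible H-space splittings $\Omega(X \vee Y) \simeq \Omega M \times \Omega E$ and $\Omega C \simeq \Omega M \times \Omega E'$ arising from the loop fibrations associated to $h$ and $h'$; under these splittings (chosen via $s$ and $\Omega q \circ s$), $\Omega q$ takes the form $\mathrm{id}_{\Omega M} \times \Omega \alpha$, reducing the problem to showing $\Omega \alpha$ has a right homotopy inverse. For this last step I would exploit the cofibration $\Omega M \ltimes \Sigma A \xrightarrow{\theta} E \xrightarrow{\alpha} E'$ together with the fact that the half-smash $\Omega M \ltimes \Sigma A$ splits as a wedge of suspensions (via $W \ltimes \Sigma V \simeq \Sigma V \vee (W \wedge \Sigma V)$), enabling a James-type splitting argument after looping to produce the required right homotopy inverse of $\Omega \alpha$. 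The concluding ``in particular'' statement then translates directly using Definition \ref{def:inert}: inertness of $p_1 \circ f$ is precisely the hypothesis that $\Omega j$ has a right homotopy inverse, while the conclusion that $\Omega q$ has a right homotopy inverse is precisely that $f$ is inert.
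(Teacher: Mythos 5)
Your proposal does not actually prove the statement in question. The statement is Theorem \ref{1thm:data} itself, Theriault's result producing the homotopy cofibration \(\Omega Z\ltimes\Sigma A\xrightarrow{\theta}E\rightarrow E'\) from the given fibration--cofibration diagram; in the paper this is quoted from \cite{t20} and not reproved. What you have written is a proof sketch of Theorem \ref{1thm:inertness}: you set \(B=X\vee Y\), \(Z=M\), \(h=j\circ p_1\), and then say ``Theorem \ref{1thm:data} then produces a homotopy cofibration,'' i.e.\ you invoke the very theorem you were asked to establish. Nothing in your text constructs the map \(\theta\) or shows that its cofibre is \(E'\); as an argument for Theorem \ref{1thm:data} it is therefore circular and has no content toward the actual claim, which requires Theriault's construction (decomposing the fibre of \(B\rightarrow C\) over \(Z\) using the splitting \(\Omega B\simeq\Omega Z\times\Omega E\) afforded by the right homotopy inverse of \(\Omega h\), together with cube/pushout arguments) rather than an application of the finished statement.

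Even judged as a proof of Theorem \ref{1thm:inertness}, the final step is a genuine gap. From the cofibration \(\Omega M\ltimes\Sigma A\xrightarrow{\theta}E\xrightarrow{\alpha}E'\) alone, a ``James-type splitting'' of the half-smash does not give a right homotopy inverse of \(\Omega\alpha\): a cofibration whose first space is a wedge of suspensions says nothing by itself about sections of the looped second map. The paper's argument supplies exactly the missing input: it passes to the special case \(C\simeq M\vee Y\) to obtain a right homotopy inverse \(r\) of \(\alpha'\) with \(l\circ r\simeq\ast\) (Lemma \ref{1lem:alpha'rightinv}), uses the common left homotopy inverse of \(\theta_f\) and \(\theta_f'\) (Lemma \ref{1lem:thetaleftinv}) and \cite{t20}*{Lemma 8.5} to conclude that \(\alpha\circ r:\Omega M\ltimes Y\rightarrow E'\) is a homotopy equivalence, and then exhibits the equivalence \(\Omega M\times\Omega(\Omega M\ltimes Y)\rightarrow\Omega C\) as factoring through \(\Omega q\) via loop multiplication. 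Your outline omits the comparison with the wedge case and the null-homotopy \(l\circ r\simeq\ast\), which are precisely what make the section of \(\Omega q\) exist; so the step ``\(\Omega\alpha\) has a right homotopy inverse'' is unsupported as written.
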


Note the special case in which \(C=Z\) and \(h'\) is the identiy map, which implies that \(E'\) is contractible and therefore that \(\theta\) is a homotopy equivalence. This gives the following corollary, a version of which the reader will also find in \cite{bt2}*{Proposition 3.5}, where it first appeared. Note that the need for the suspension \(\Sigma A\) is dropped.

\begin{cor}[Beben-Theriault]\label{1cor:splitfib} 
Suppose there is a homotopy cofibration \[A\xrightarrow{f}B\xrightarrow{h}C\] such that the map $\Omega h$ has a right homotopy inverse. Then there exists a homotopy fibration \[\Omega C\ltimes A\rightarrow B\xrightarrow{h}C.\] Moreover, this homotopy fibration splits after looping, so there is a homotopy equivalence $\Omega B\simeq \Omega C\times\Omega(\Omega C\ltimes A).$ \hfill $\square$
\end{cor}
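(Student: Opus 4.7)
The plan is to derive the corollary as the specialisation of Theorem~\ref{1thm:data} in which the right-hand column is the trivial fibration. Applying it with $Z=C$ and $h'=\mathrm{id}_C$, the right column becomes the fibration of a contractible total space over $C$, forcing $E'\simeq *$; its middle column is the homotopy fibration $F\to B\xrightarrow{h} C$ in which $F$ denotes the homotopy fibre of $h$. Under the hypothesis that $\Omega h$ admits a right homotopy inverse, Theorem~\ref{1thm:data} produces a homotopy cofibration $\Omega C\ltimes\Sigma A\xrightarrow{\theta} F\to *$, which forces $\theta$ to be a homotopy equivalence. This establishes the fibre identification $F\simeq \Omega C\ltimes\Sigma A$ when the cofibration domain happens to be a suspension.

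To remove the suspension hypothesis on the domain, observe that in the degenerate case $E'\simeq *$ the map $\theta$ admits a more direct description. It is assembled from the canonical lift $A\to F$ (which exists because $h\circ f$ is null, being part of a homotopy cofibration) together with the fibrewise $\Omega C$-action on $F$ arising from the fibration $F\to B\to C$, modified using the right homotopy inverse of $\Omega h$ so as to descend to the half-smash $\Omega C\ltimes A$. This is the explicit Beben-Theriault construction from \cite{bt2}*{Proposition~3.5}, and it produces a homotopy equivalence $\Omega C\ltimes A\xrightarrow{\simeq} F$ for any $A$, giving the homotopy fibration $\Omega C\ltimes A\to B\xrightarrow{h} C$ asserted in the first half of the statement.

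For the loop-space splitting, loop the homotopy fibration just constructed to obtain $\Omega(\Omega C\ltimes A)\to \Omega B\xrightarrow{\Omega h}\Omega C$; the given right homotopy inverse $s$ of $\Omega h$ together with $\Omega$ of the fibre inclusion yields a map $\Omega C\times \Omega(\Omega C\ltimes A)\to \Omega B$ that is a homotopy equivalence by the standard principle that a looped fibration whose base projection admits a section splits as a product. The main obstacle I expect is justifying the removal of the suspension in the middle step, as it does not follow formally from Theorem~\ref{1thm:data} as stated; I would deal with it by inspecting the construction of $\theta$ in the degenerate case $E'\simeq *$, where the half-smash arises already from the action of $\Omega C$ on $F$ and does not need a suspension input on the $A$ side.
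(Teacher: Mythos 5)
Your proposal is correct and follows essentially the same route as the paper: the paper obtains the statement as the special case of Theorem~\ref{1thm:data} with $Z=C$ and $h'=\mathrm{id}_C$ (so $E'\simeq *$ and $\theta$ is an equivalence), and, exactly as you do, it handles the removal of the suspension on the domain not by a formal deduction from Theorem~\ref{1thm:data} but by appealing to the original construction in \cite{bt2}*{Proposition 3.5}. Your added sketch of the half-smash map via the canonical lift $A\to F$ and the $\Omega C$-action, and the H-space argument for the looped splitting, fill in details the paper leaves entirely to that citation.
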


Returning to the situation of Diagram (\ref{1dgm:setup1}), since $p_1\circ f$ is inert, the map \(\Omega j\) has a right homotopy inverse. Let $E''$ be the homotopy fibre of \(j\). Corollary \ref{1cor:splitfib} applies to the homotopy cofibration in the bottom row of (\ref{1dgm:setup1}), which implies that there is a homotopy equivalence \(E''\simeq\Omega M\ltimes \Sigma A\). Equivalently, there is a homotopy cofibration 
\begin{equation}\label{eqn:theta}
\Omega M\ltimes\Sigma A\xrightarrow{\theta} E''\rightarrow \ast.
\end{equation} 
Now, let \(s\) denote a right homotopy inverse of \(\Omega j\), and \(t\) that of \(\Omega p_1\). Then the composite \(\Omega q\circ t\circ s\) is a right homotopy inverse for \(\Omega \varphi\). Let \(h=j\circ p_1\) and let \(E\) and \(E'\) denote the homotopy fibres of \(h\) and \(\varphi\), respectively. We have the following homotopy commutative diagram
\begin{equation}\label{1dgm:data2} 
    \begin{tikzcd}[row sep=1.5em, column sep = 1.5em]
        && E \arrow[rr, "\alpha"] \arrow[dd] && E' \arrow[dd] \\
        &&&&& \\
        \Sigma A \arrow[rr, "f"] && X\vee Y \arrow[rr, "q"] \arrow[dd, "h"] && C \arrow[dd, "\varphi"] \\
        &&&&& \\
        && M \arrow[rr, equal] && M
    \end{tikzcd}
\end{equation}
where the middle and right columns are homotopy fibrations, the map \(\alpha\) is an induced map of fibres and the middle row is a homotopy cofibration. Therefore, by Theorem \ref{1thm:data}, there exists a homotopy cofibration (here we write \(\theta_f\) to distinguish from the map \(\theta\) in (\ref{eqn:theta}))
\begin{equation}\label{1cofib1}    
    \Omega M\ltimes\Sigma A\xrightarrow{\theta_f} E\xrightarrow{\alpha} E'. 
\end{equation}
Moreover, the right homotopy inverse for \(\Omega \varphi\) enables us to apply Corollary \ref{1cor:splitfib} to the right-most column of (\ref{1dgm:setup1}), so we have homotopy equivalences \[E'\simeq\Omega M\ltimes Y\text{\; and \;}\Omega C\simeq\Omega M\times\Omega(\Omega M\ltimes Y).\] The proof strategy for Theorem \ref{1thm:inertness} will be as follows: we wish to gain more control over the homotopy class of the first equivalence, and use this knowledge to deduce further facts about the second. 

The next step is to consider the homotopy fibration diagram
\begin{equation}\label{1dgm:fib1}
    \begin{tikzcd}[row sep=1.5em, column sep = 1.5em]
        E \arrow[rr, dashed, "l"] \arrow[dd] && E'' \arrow[dd] \\
        &&& \\
        X\vee Y \arrow[rr, "p_1"] \arrow[dd, "h"] && X \arrow[dd, "j"] \\
        &&& \\
        M \arrow[rr, equal] && M
    \end{tikzcd}
\end{equation}
in which the bottom square is commutative by definition of the map \(h\), so the induced map of fibres \(l\) exists. In \cite{t20}*{Remark 2.7}, a naturality condition is given for Theorem \ref{1thm:data}, which is satisfied by virtue of (\ref{1dgm:fib1}). Thus there is a commutative diagram of homotopy cofibrations which combines (\ref{eqn:theta}) and (\ref{1cofib1}):
\begin{equation}\label{1dgm:cofib1}
    \begin{tikzcd}[row sep=1.5em, column sep = 1.5em]
        \Omega M \ltimes \Sigma A \arrow[rr, "\theta_f"] \arrow[dd, equal] && E  \arrow[rr, "\alpha"] \arrow[dd, "l"] && E' \arrow[dd] \\
        &&&&& \\
        \Omega M \ltimes \Sigma A \arrow[rr, "\theta"] && E'' \arrow[rr] && *.
    \end{tikzcd}
\end{equation}
Note that since \(\theta\) is a homotopy equivalence, (\ref{1dgm:cofib1}) implies that the map \(\theta_f\) always has a left homotopy inverse. Moreover, observe also that in constructing the above we only considered the behaviour of $f$ when restricted to $X$. We record this in the lemma below, for ease of reference.

\begin{lem}\label{1lem:thetaleftinv}
With the set-up of Diagram (\ref{1dgm:cofib1}) above, the map \(\theta_f\) has a left homotopy inverse whose homotopy class depends only on the homotopy class of the composite \(f\) when restricted to \(X\). \hfill $\square$
\end{lem}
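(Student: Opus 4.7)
The plan is to read a left homotopy inverse of \(\theta_f\) directly off Diagram~(\ref{1dgm:cofib1}) and then trace its construction to locate exactly where \(f\) enters. First, the bottom row of (\ref{1dgm:cofib1}) is a homotopy cofibration whose cofibre is contractible, so \(\theta\) is a homotopy equivalence. Choosing any homotopy inverse \(\theta^{-1}\colon E''\to\Omega M\ltimes\Sigma A\) and setting \(r=\theta^{-1}\circ l\), the homotopy commutativity of the left-hand square in (\ref{1dgm:cofib1}) gives \(l\circ\theta_f\simeq\theta\), so \(r\circ\theta_f\simeq\mathrm{id}\). This produces the desired left inverse with no further analysis of \(f\) itself.

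Next I would track where \(f\) actually enters the construction of \(r\). The map \(l\) is the induced map of homotopy fibres in Diagram~(\ref{1dgm:fib1}), whose data consist only of \(p_1\), \(j\) and \(h=j\circ p_1\); none of these involve \(f\), so \(l\) is independent of \(f\). The map \(\theta\), on the other hand, was obtained by applying Corollary~\ref{1cor:splitfib} to the bottom row of (\ref{1dgm:setup1}), that is, to the cofibration \(\Sigma A\xrightarrow{p_1\circ f}X\xrightarrow{j}M\). Hence the homotopy class of \(\theta\), and so of \(\theta^{-1}\), is determined by the homotopy class of \(p_1\circ f\), and the same is therefore true of \(r=\theta^{-1}\circ l\).

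The step I expect to be the main obstacle is making precise what ``determined by \(p_1\circ f\)'' means: two maps \(f,f'\colon\Sigma A\to X\vee Y\) with \(p_1\circ f\simeq p_1\circ f'\) yield different instances of Diagram~(\ref{1dgm:setup1}), and one needs to check that the resulting left inverses \(r,r'\) agree up to homotopy. I would handle this by invoking the naturality of Theriault's construction recorded in \cite{t20}*{Remark~2.7} (and already used to produce (\ref{1dgm:cofib1})), applied to the morphism of cofibrations that is the identity on \(X\), \(M\) and \(\Sigma A\) and realises a chosen homotopy from \(p_1\circ f\) to \(p_1\circ f'\). This identifies the two instances of \(\theta\) up to homotopy; since \(l\) is already independent of \(f\), the two candidates \(r\) and \(r'\) then agree up to homotopy, as required.
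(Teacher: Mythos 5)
Your proposal is correct and takes essentially the same route as the paper: the left inverse is \(\theta^{-1}\circ l\), read off from Diagram~(\ref{1dgm:cofib1}) via \(l\circ\theta_f\simeq\theta\) and the fact that \(\theta\) is a homotopy equivalence, and both \(\theta\) and \(l\) are built from the bottom row of (\ref{1dgm:setup1}) and the map \(p_1\), hence depend only on \(p_1\circ f\) and not on \(f\) itself. The paper leaves the ``depends only on the homotopy class of \(p_1\circ f\)'' claim as an observation; your final paragraph invoking the naturality of Theriault's construction from \cite{t20}*{Remark~2.7} just spells the same point out more carefully.
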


This enables us to switch focus for the time being, and consider the special case in which \(C\simeq M\vee Y\). Diagram (\ref{1dgm:setup1}) now becomes the homotopy cofibration diagram
\begin{equation}\label{1dgm:cofib2}
    \begin{tikzcd}[row sep=1.5em, column sep = 1.5em]
        && Y \arrow[rr, equal] \arrow[dd, hook] && Y \arrow[dd, hook] \\ 
        &&&&& \\
        \Sigma A \arrow[rr, "f"] \arrow[dd, equal] && X\vee Y  \arrow[rr, "j\vee 1"] \arrow[dd, "p_1"] && M\vee Y \arrow[dd, "p_1"] \\
        &&&&& \\
        \Sigma A \arrow[rr, "p_1\circ f"] && X \arrow[rr, "j"] && M
    \end{tikzcd}
\end{equation}
where we have \(q=j\vee1\) and \(\varphi=p_1\). Since the map \(\Omega p_1\) has a right homotopy inverse, we may apply Corollary \ref{1cor:splitfib} to the homotopy cofibration in the right-most column of (\ref{1dgm:cofib2}), again giving a homotopy equivalence \(E'\simeq \Omega M\ltimes Y\). Thus Diagram (\ref{1dgm:data2}) becomes
\begin{equation}\label{1dgm:fib1'}
    \begin{tikzcd}[row sep=1.5em, column sep = 1.5em]
        && E \arrow[rr, "\alpha'"] \arrow[dd] && \Omega M\ltimes Y \arrow[dd] \\
        &&&&& \\
        \Sigma A \arrow[rr, "f"] && X\vee Y \arrow[rr, "j\vee 1"] \arrow[dd, "h"] && M\vee Y \arrow[dd, "p_1"] \\
        &&&&& \\
        && M \arrow[rr, equal] && M
    \end{tikzcd}
\end{equation}
and, analogously to (\ref{1cofib1}), there is a homotopy cofibration
\begin{equation*}
    \Omega M\ltimes\Sigma A\xrightarrow{\theta_f'} E\xrightarrow{\alpha'} \Omega M\ltimes Y.
\end{equation*}
Noting that the upper square of (\ref{1dgm:fib1'}) is a homotopy pullback and arguing exactly as in the proof of \cite{t20}*{Lemma 8.3} gives the following.

\begin{lem}\label{1lem:alpha'rightinv}
The map \(\alpha'\) has a right homotopy inverse \(r:\Omega M\ltimes Y\rightarrow E\) such that the composite \(l\circ r\) is null homotopic. \hfill $\square$
\end{lem}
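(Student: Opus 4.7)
The plan is to exploit the homotopy pullback property of the upper square of Diagram (\ref{1dgm:fib1'}), which realises $E$ as the homotopy pullback of $X\vee Y \xrightarrow{j \vee 1} M \vee Y \leftarrow \Omega M \ltimes Y$. Producing $r$ then reduces to constructing a map $\sigma : \Omega M \ltimes Y \to X \vee Y$ together with a homotopy from $(j \vee 1) \circ \sigma$ to the canonical fibre inclusion $\iota : \Omega M \ltimes Y \to M \vee Y$; the pair $(\sigma,\,1_{\Omega M \ltimes Y})$ with this homotopy will assemble, by universality, into the sought lift $r$ with $\alpha' \circ r \simeq 1$.

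To construct $\sigma$, I would first apply Corollary \ref{1cor:splitfib} to the cofibration $Y \hookrightarrow X \vee Y \xrightarrow{p_1} X$ (whose loop admits the section $\Omega i_1$) to obtain a homotopy fibration $\Omega X \ltimes Y \to X \vee Y \xrightarrow{p_1} X$. The map $j : X \to M$ then induces a morphism of such fibrations whose restriction to fibres is homotopic to $\Omega j \ltimes 1$. Letting $s : \Omega M \to \Omega X$ denote the chosen right homotopy inverse of $\Omega j$, I set
\[
\sigma \;=\; \bigl(\Omega X \ltimes Y \to X \vee Y\bigr) \circ (s \ltimes 1) : \Omega M \ltimes Y \to X \vee Y.
\]
Naturality combined with $\Omega j \circ s \simeq 1_{\Omega M}$ yields $(j \vee 1) \circ \sigma \simeq \iota$, and moreover $p_1 \circ \sigma \simeq \ast$ since $\Omega X \ltimes Y$ is the homotopy fibre of $p_1$. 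The universal property of the homotopy pullback then furnishes the desired right homotopy inverse $r : \Omega M \ltimes Y \to E$ of $\alpha'$.

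For the claim $l \circ r \simeq \ast$, I would use the description of $l$ from Diagram (\ref{1dgm:fib1}) as the map of homotopy fibres over $\mathrm{id}_M$ induced by $p_1 : X \vee Y \to X$. Since the projection of $r$ into $X \vee Y$ is $\sigma$ and $p_1 \circ \sigma \simeq \ast$, the composite $l \circ r : \Omega M \ltimes Y \to E''$ lands, up to homotopy, in the fibre of $E'' \to X$ over the basepoint, namely $\Omega M$. The remaining content of $l \circ r$ is a map into this fibre measuring the discrepancy between the canonical nullhomotopy of $h \circ \sigma$ (coming from $r$ landing in $\mathrm{hofib}(h)$) and the one obtained by applying $j$ to the nullhomotopy of $p_1 \circ \sigma$; verifying that this discrepancy vanishes is exactly the coherence argument of \cite{t20}*{Lemma 8.3}, which transfers directly since the only difference in setup is that here $C \simeq M \vee Y$ and $\varphi = p_1$, neither of which enters that verification. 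This final coherence check is the principal obstacle, as all preceding steps amount to routine applications of homotopy pullback universality and naturality of the half-smash fibration.
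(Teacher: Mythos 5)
Your proposal is consistent with the paper's own treatment, which is to observe that the upper square of Diagram~(\ref{1dgm:fib1'}) is a homotopy pullback and then invoke the argument of \cite{t20}*{Lemma 8.3}; you flesh out that step by explicitly exhibiting the lifting data \((\sigma, 1_{\Omega M\ltimes Y})\) via naturality of the half-smash fibration and the section \(s\), and then correctly identify the null-homotopy of \(l\circ r\) as the coherence check that must be imported from \cite{t20}. Since the paper's own proof is exactly this citation, your elaboration is compatible and adds no gap, though you should be aware that your specific choice of \(\sigma\) must coincide (up to homotopy) with the lift produced in \cite{t20}*{Lemma 8.3} for the deferred coherence argument to transfer verbatim rather than requiring an independent verification.
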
 

Combining this with the general situation, we have homotopy cofibrations \[\Omega M\ltimes\Sigma A\xrightarrow{\theta_f} E\xrightarrow{\alpha} E'\text{\; and \;}\Omega M\ltimes\Sigma A\xrightarrow{\theta_f'} E\xrightarrow{\alpha'} \Omega M\ltimes Y.\] By Lemma \ref{1lem:thetaleftinv} there is a left homotopy inverse \(k\) for both $\theta_f$ and $\theta_f'$. Lemma \ref{1lem:alpha'rightinv} gives a right homotopy inverse $r$ for $\alpha'$, and since \(l\circ r\simeq \ast\), Diagram (\ref{1dgm:cofib1}) implies that \(k\circ r\simeq \ast\). By \cite{t20}*{Lemma 8.5}, this implies that the composite 
\begin{equation}\label{1equiv1}
    \Omega M \ltimes Y\xrightarrow{r}E\xrightarrow{\alpha} E'
\end{equation} 
is a homotopy equivalence. This achieves our first goal of gaining more control over the homotopy equivalence \(E'\simeq\Omega M\ltimes Y\); we will use the fact that it factors through \(E\) to prove Theorem \ref{1thm:inertness}. Recall that applying Corollary \ref{1cor:splitfib} to the right-most column of Diagram (\ref{1dgm:data2}) yields a homotopy equivalence \[\Omega C\simeq \Omega M\times \Omega(\Omega M\ltimes Y).\] 

\begin{thmproof}
We have already shown that the map \(\Omega \varphi\) has a right homotopy inverse given by \(\Omega q\circ s\circ t\), due to homotopy commutativity of (\ref{1dgm:setup1}), thus it remains to prove that the map \(\Omega q\) also has a right homotopy inverse. We shall do this by showing that the above homotopy equivalence for \(\Omega C\) factors through \(\Omega q\), from which the existence of a right homotopy inverse for \(\Omega q\) follows immediately. Let \(\lambda=s\circ t\), which is a right homotopy inverse for the map \(\Omega h\). Taking loops on the middle and right columns of (\ref{1dgm:data2}) gives
\begin{equation}\label{1dgm:loopsetup}
    \begin{tikzcd}[row sep=1.5em, column sep = 1.5em]
        \Omega E \arrow[rr, "\Omega \alpha"] \arrow[dd] && \Omega E' \arrow[dd] \\
        &&& \\
        \Omega(X\vee Y) \arrow[rr, "\Omega q"] \arrow[dd, "\Omega h"] && \Omega C \arrow[dd, "\Omega \varphi"] \\
        &&& \\
        \Omega M \arrow[rr, equal] && \Omega M.
    \end{tikzcd}
\end{equation}
Letting \(r'\) denote \(\Omega M\ltimes Y\xrightarrow{r}E\rightarrow X\vee Y\), we have the composite \[e:\Omega M\times \Omega(\Omega M\ltimes Y)\xrightarrow{\lambda\times\Omega r'}\Omega(X\vee Y)\times\Omega(X\vee Y)\xrightarrow{\mu}\Omega(X\vee Y)\xrightarrow{\Omega q}\Omega C\] where \(\mu\) is the loop multiplication. The homotopy commutativity of (\ref{1dgm:loopsetup}) together with the fact that \(\Omega q\) is an \(H\)-map implies that that \(e\) is a homotopy equivalence, so the proof is complete.
\end{thmproof}

\section{Inert Maps and Loop Space Decompositions}\label{sec:inert}

We wish to apply Theorem \ref{1thm:inertness} to the situation of connected sums: suppose we have two homotopy cofibrations of simply connected spaces \[\Sigma A\xrightarrow{f} B\xrightarrow{j} C\text{\; and \;}\Sigma A\xrightarrow{g} D\xrightarrow{l} E.\] Consider the composite $f\check{+}g:\Sigma A\xrightarrow{\sigma}\Sigma A\vee \Sigma A \xrightarrow{f\vee g}B\vee D$, where $\sigma$ denotes the suspension co-multiplication on $\Sigma A$ (we use the symbol \(\check{+}\) to distinguish from the case in which \(B=D\), where using \(+\) would introduce an ambiguity). The homotopy cofibre of $f\check{+}g$ is called the \textit{generalised connected sum} of $C$ and $E$ over $\Sigma A$, written $C\#_{\Sigma A}E$. When the space $A$ is clear, we will often omit the subscript. 

Note that $p_1\circ(f\check{+}g)\simeq f$. We have the diagram below, in which each complete row and column is a homotopy cofibration, and the bottom-right square is a homotopy pushout. We label the induced map $C\#_{\Sigma A}E\rightarrow C$ by $h$.
\begin{equation}\label{dgm:setup}
    \begin{tikzcd}[row sep=1.5em, column sep = 1.5em]
        && D \arrow[rr, equal] \arrow[dd, hook] && D \arrow[dd] \\
        &&&&& \\
        \Sigma A \arrow[rr, "f\check{+}g"] \arrow[dd, equal] && B\vee D  \arrow[rr, "q"] \arrow[dd, "p_1"] && C\#_{\Sigma A}E \arrow[dd, "h"] \\
        &&&&& \\
        \Sigma A \arrow[rr, "f"] && B \arrow[rr, "j"] && C.
    \end{tikzcd}
\end{equation}
The following lemma is a stronger version of \cite{chenfib}*{Lemma 2.2}; it follows immediately from Theorem \ref{1thm:inertness} and by applying Corollary \ref{1cor:splitfib} to the rightmost column of Diagram (\ref{dgm:setup}).

\begin{lem} \label{lem:inertness}
Take the setup of Diagram (\ref{dgm:setup}). If the map \(f\) is inert, then so is \(f\check{+}g\). Moreover, there is a homotopy equivalence \(\Omega (C\#_{\Sigma A}E)\simeq\Omega C\times\Omega(\Omega C\ltimes D)\). \hfill \(\square\)
\end{lem}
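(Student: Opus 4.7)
The plan is to recognise Diagram (\ref{dgm:setup}) as a direct instance of Diagram (\ref{1dgm:setup1}) and then to invoke Theorem \ref{1thm:inertness} together with Corollary \ref{1cor:splitfib}. First, I would set up the dictionary: take $X\leftrightarrow B$, $Y\leftrightarrow D$, $M\leftrightarrow C$, and $\varphi\leftrightarrow h$, with the top arrow $f$ of (\ref{1dgm:setup1}) played by $f\check{+}g$ and the bottom arrow $p_1\circ f$ of (\ref{1dgm:setup1}) played by $f$. The one compatibility to check is that the homotopy $p_1\circ(f\check{+}g)\simeq f$ holds, which follows from the fact that the inclusion of the first wedge summand is a right homotopy inverse for $p_1$ (as noted immediately before the diagram). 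Once this identification is in place, the two homotopy pushout/cofibration squares of (\ref{dgm:setup}) line up with those of (\ref{1dgm:setup1}) exactly.

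Next, I would apply Theorem \ref{1thm:inertness}. Its hypothesis is precisely that the lower composite — here $f$ — is inert, which is our assumption. The conclusion produces right homotopy inverses for both $\Omega q$ and $\Omega h$. The former gives immediately that $f\check{+}g$ is inert, which is the first assertion of the lemma.

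For the second assertion, I would turn to the rightmost vertical homotopy cofibration in Diagram (\ref{dgm:setup}), namely
\[
D\hookrightarrow C\#_{\Sigma A}E\xrightarrow{h}C.
\]
Since $\Omega h$ has a right homotopy inverse by the previous step, Corollary \ref{1cor:splitfib} applies verbatim and yields the desired homotopy equivalence $\Omega(C\#_{\Sigma A}E)\simeq \Omega C\times\Omega(\Omega C\ltimes D)$.

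The only real obstacle — such as it is — is bookkeeping: making sure the labels of (\ref{dgm:setup}) are correctly matched to those of (\ref{1dgm:setup1}) so that Theorem \ref{1thm:inertness} can be invoked without reproving anything. No new homotopy-theoretic construction is required; the lemma is genuinely a corollary of the two earlier results.
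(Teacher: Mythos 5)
Your proposal is correct and takes essentially the same approach as the paper: identify Diagram (\ref{dgm:setup}) as an instance of Diagram (\ref{1dgm:setup1}) via $p_1\circ(f\check{+}g)\simeq f$, invoke Theorem \ref{1thm:inertness} to get inertness of $f\check{+}g$, and then apply Corollary \ref{1cor:splitfib} to the rightmost column for the loop space splitting. The paper states this as following immediately from those two results; you simply spell out the dictionary.
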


\begin{rem}
Of particular note is the fact that Lemma \ref{lem:inertness} implies that the map \(f\check{+}g\) inherits inertness from \(f\), regardless of the homotopy class of the map \(g\).
\end{rem}

We will use the above framework in our consideration of connected sums of Poincar\'e Duality complexes. For such a complex, there exists a cell structure that has a single top-dimensional cell, and we may define the connected sum operation similarly to that of manifolds. Namely, for two $n$-dimensional Poincar\'e Duality complexes $M$ and $N$, the space $M\#N$ is formed by removing an $n$-dimensional open disc from the interior of the top-cells of $M$ and $N$ and joining the resulting complexes along their boundaries. Up to homotopy, $M\# N$ coincides with the generalised connected sum $M\#_{S^{n-1}}N$ \cite{wall1967poincare}. This gives rise to the following example.

\begin{exa}
    If \(M\) and \(N\) are smooth, closed, oriented manifolds manifolds then we choose \(M\#_{S^{n-1}}N\) such that it coincides (up to homotopy) with the usual orientation preserving connected sum of \(M\) and \(N\).
\end{exa}

In pursuit of our homotopy theoretic analysis, we seek a framework whereby it may be shown that a Poincar\'e Duality complex has the homotopy type of a connected sum, after looping. To begin to give this we have the following proposition, which is a restatement of \cite{t20}*{Theorem 9.1 (b)-(c)}, though we provide a new proof.

\begin{prop}[Theriault]\label{prop:PDconnsum}
Let $M$ and $N$ be two Poincar\'e Duality complexes of dimension $n$, where $n>3$, such that the attaching map of the top-cell of $M$ is inert. Then there is a homotopy equivalence \[\Omega (M\#N)\simeq \Omega M\times\Omega(\Omega M\ltimes N_{n-1})\] where $N_{n-1}$ denotes the $(n-1)$-skeleton of $N$. Furthermore, the attaching map of the top-cell of $M\#N$ is inert.
\end{prop}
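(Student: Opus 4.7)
The plan is to obtain this as a direct application of Lemma \ref{lem:inertness}, once the connected sum is recognised as a generalised connected sum in the sense of Diagram (\ref{dgm:setup}). The heart of the argument is to match the hypothesis about inertness of $M$'s top-cell attaching map to the inertness hypothesis on the map called $f$ in Diagram (\ref{dgm:setup}), and then read off both conclusions (the loop space splitting, and the persistence of inertness).

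First I would unpack the cell structure. Since $M$ and $N$ are $n$-dimensional Poincar\'e Duality complexes, each admits a CW structure with a single top-dimensional cell, yielding homotopy cofibrations
\[
S^{n-1}\xrightarrow{f}M_{n-1}\xrightarrow{j}M\quad\text{and}\quad S^{n-1}\xrightarrow{g}N_{n-1}\xrightarrow{l}N,
\]
where $M_{n-1}$ and $N_{n-1}$ denote the $(n-1)$-skeleta. The assumption $n>3$ ensures both skeleta remain simply connected, keeping us inside the paper's standing hypotheses. The statement that the attaching map of the top-cell of $M$ is inert is precisely the statement that $f$ is inert in the sense of Definition \ref{def:inert}. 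Writing $S^{n-1}=\Sigma S^{n-2}$ puts us squarely in the framework of the section: with $\Sigma A=S^{n-1}$, $B=M_{n-1}$, $C=M$, $D=N_{n-1}$ and $E=N$, the map $f\check{+}g:S^{n-1}\to M_{n-1}\vee N_{n-1}$ is defined.

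Next I would invoke the identification (recalled in the paragraph preceding the proposition, and originating in Wall's work on Poincar\'e Duality complexes) of the topological connected sum $M\#N$ with the generalised connected sum $M\#_{S^{n-1}}N$ up to homotopy. This exhibits $M\#N$ as the homotopy cofibre of $f\check{+}g$, and hence realises precisely the bottom row of Diagram (\ref{dgm:setup}). The attaching map of the top-cell of $M\#N$ is thereby identified, up to homotopy, with $f\check{+}g$.

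With the setup in place, both conclusions fall out of Lemma \ref{lem:inertness}. The inertness of $f$ propagates by that lemma to inertness of $f\check{+}g$, which by the identification above is the attaching map of the top-cell of $M\#N$; this gives the second assertion. The same lemma simultaneously supplies the homotopy equivalence
\[
\Omega(M\#N)\simeq\Omega(C\#_{\Sigma A}E)\simeq\Omega M\times\Omega(\Omega M\ltimes N_{n-1}),
\]
which is the first assertion. I expect the only delicate point to be the identification of the top-cell attaching map of $M\#N$ with $f\check{+}g$: the homotopy class of $g$ is unconstrained by the hypothesis, so this relies genuinely on the remark following Lemma \ref{lem:inertness} that inertness of $f\check{+}g$ is independent of $g$, together with Wall's theorem to ensure the topological and generalised connected sums agree up to homotopy. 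Everything else is bookkeeping.
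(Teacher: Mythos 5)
Your proposal is correct and follows essentially the same route as the paper's proof: it writes down the top-cell homotopy cofibrations for $M$ and $N$, uses Wall's identification of $M\#N$ with the generalised connected sum $M\#_{S^{n-1}}N$ to recognise the attaching map of the top cell of $M\#N$ as $f_M\check{+}f_N$, and then reads off both the loop space splitting and the inertness of the new attaching map directly from Lemma \ref{lem:inertness}. The paper's proof does exactly this, merely presenting the relevant diagram and appealing to the lemma in one step.
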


\begin{proof}
Let $f_M$ and $f_N$ be the attaching maps of the top-cells of $M$ and $N$, respectively, onto their $(n-1)$-skeleta $M_{n-1}$ and $N_{n-1}$. We have homotopy cofibrations \[S^{n-1}\xrightarrow{f_M}M_{n-1}\rightarrow M\text{\; and \;}S^{n-1}\xrightarrow{f_N}N_{n-1}\rightarrow N.\] Similar to Diagram (\ref{dgm:setup}), we have the following homotopy cofibration diagram.
\begin{equation*}
    \begin{tikzcd}[row sep=1.5em, column sep = 1.5em]
        && N_{n-1} \arrow[rr, equal] \arrow[dd, hook] && N_{n-1} \arrow[dd] \\
        &&&&& \\
        S^{n-1} \arrow[rr, "f_M\check{+}f_N"] \arrow[dd, equal] && M_{n-1}\vee N_{n-1} \arrow[rr] \arrow[dd, "p_1"] && M\#N \arrow[dd, "h"] \\
        &&&&& \\
        S^{n-1} \arrow[rr, "f_M"] && M_{n-1}  \arrow[rr] && M.
    \end{tikzcd} 
\end{equation*}
The result then follows from Lemma \ref{lem:inertness}.
\end{proof}

So far we have only used Theorem \ref{1thm:inertness} to show that a sum of attaching maps is inert. The next theorem gives conditions for an attaching map to be inert without first supposing that it is formed by the \(\check{+}\) operation. 

\begin{thm} \label{thm:PDconn}
Let $n>3$ and suppose that $M$, $N$ and $P$ are $n$-dimensional Poincar\'e Duality complexes. Let \(f_M\) and \(f_P\) denote the attaching maps of the top-cells of $M$ and $P$, respectively, and suppose further that: 
\begin{enumerate}
    \item[(i)] $M_{n-1}\simeq P_{n-1}\vee N_{n-1}$;
    \item[(ii)] the composite $p_1\circ f_M:S^{n-1}\rightarrow P_{n-1}$ is inert;
    \item[(iii)] the homotopy cofibre of $p_1\circ f_M$, \(Q\), is such that \(\Omega Q\simeq\Omega P\);
    \item[(iv)] the map \(f_P\) is inert.
\end{enumerate}
Then the attaching map $f_M$ is inert and $\Omega M\simeq \Omega(N\#P)$.
\end{thm}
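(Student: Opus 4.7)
The plan is to recognise that conditions (i)--(iii) place us precisely in the setting of Diagram (\ref{1dgm:setup1}), so that Theorem \ref{1thm:inertness} gives the inertness of $f_M$ immediately, and Corollary \ref{1cor:splitfib} then yields a loop space splitting of $\Omega M$. In parallel, condition (iv) lets us apply Proposition \ref{prop:PDconnsum} to $P\#N$, producing the same splitting; comparing the two expressions gives the desired homotopy equivalence.

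First I would set up Diagram (\ref{1dgm:setup1}) with $\Sigma A=S^{n-1}$, $X=P_{n-1}$, $Y=N_{n-1}$ and $f=f_M$; by (i) we have $X\vee Y\simeq M_{n-1}$, so the middle row is the top-cell cofibration $S^{n-1}\xrightarrow{f_M}M_{n-1}\to M$, identifying $C$ with $M$. The bottom row is then the cofibration $S^{n-1}\xrightarrow{p_1\circ f_M}P_{n-1}\to Q$, where $Q$ is exactly the homotopy cofibre appearing in (iii). By hypothesis (ii), $p_1\circ f_M$ is inert, so $\Omega j$ admits a right homotopy inverse. Theorem \ref{1thm:inertness} therefore delivers two conclusions: $f_M$ itself is inert (the first half of the theorem), and $\Omega\varphi$ has a right homotopy inverse, where $\varphi\colon M\to Q$ is the induced map from the pushout.

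Next I would apply Corollary \ref{1cor:splitfib} to the right-hand column $Y\to M\xrightarrow{\varphi}Q$ of Diagram (\ref{1dgm:setup1}). The right homotopy inverse for $\Omega\varphi$ just obtained is the input that the corollary requires, and it produces the homotopy equivalence
\[
\Omega M\;\simeq\;\Omega Q\times\Omega(\Omega Q\ltimes N_{n-1}).
\]
Invoking the hypothesis $\Omega Q\simeq\Omega P$ from (iii) rewrites this as
\[
\Omega M\;\simeq\;\Omega P\times\Omega(\Omega P\ltimes N_{n-1}).
\]
On the other side, condition (iv) says that the attaching map $f_P$ of the top cell of $P$ is inert, so Proposition \ref{prop:PDconnsum} applies to $P\#N$ and produces
\[
\Omega(P\#N)\;\simeq\;\Omega P\times\Omega(\Omega P\ltimes N_{n-1}).
\]
Since the generalised connected sum is symmetric in its summands up to homotopy, $\Omega(N\#P)\simeq\Omega(P\#N)$, and combining the two displays gives $\Omega M\simeq\Omega(N\#P)$.

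The only real obstacle is conceptual rather than computational: verifying that the data supplied by (i)--(iii) genuinely instantiates Diagram (\ref{1dgm:setup1}), so in particular that $Q$ really is the pushout $M\cup_{P_{n-1}}\ast$ appearing in the bottom-right square. This is a direct consequence of (i) expressing the attaching map through the wedge and of $Q$ being defined in (iii) as the cofibre of $p_1\circ f_M$. Once this identification is in place, the rest of the argument is an assembly of Theorem \ref{1thm:inertness}, Corollary \ref{1cor:splitfib} and Proposition \ref{prop:PDconnsum}, with no additional hands-on computation required.
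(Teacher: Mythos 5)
Your proposal is correct and follows essentially the same route as the paper: instantiate Diagram (\ref{1dgm:setup1}) with $\Sigma A = S^{n-1}$, $X=P_{n-1}$, $Y=N_{n-1}$, $f=f_M$; apply Theorem \ref{1thm:inertness} and Corollary \ref{1cor:splitfib} to get $\Omega M\simeq\Omega Q\times\Omega(\Omega Q\ltimes N_{n-1})\simeq\Omega P\times\Omega(\Omega P\ltimes N_{n-1})$; then produce the matching splitting of $\Omega(N\#P)$ from condition~(iv). The only cosmetic difference is that you invoke Proposition \ref{prop:PDconnsum} applied to $P\#N$ together with the symmetry $\Omega(P\#N)\simeq\Omega(N\#P)$, whereas the paper sets up the analogous cofibration diagram for $N\#P$ directly and cites Lemma \ref{lem:inertness}; since Proposition \ref{prop:PDconnsum} is itself proved from Lemma \ref{lem:inertness}, and since the paper's diagram for $N\#P$ also implicitly reorders the wedge to $P_{n-1}\vee N_{n-1}$, these are the same argument modulo presentation.
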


\begin{proof}
From \textit{(i)} we have the following homotopy cofibration diagram
\begin{equation}\label{dgm:PDconn}
    \begin{tikzcd}[row sep=3em, column sep = 3em]
        & N_{n-1} \arrow[r, equal] \arrow[d, hook] & N_{n-1}  \arrow[d] \\
        S^{n-1} \arrow[r, "f_M"] \arrow[d, equal] & P_{n-1}\vee N_{n-1} \arrow[r] \arrow[d, "p_1"] & M \arrow[d] \\
        S^{n-1} \arrow[r, "p_1\circ f_M"] & P_{n-1} \arrow[r] & Q.
    \end{tikzcd}
\end{equation}
Condition \textit{(ii)} places us in the situation of Theorem \ref{1thm:inertness}, therefore \(f_M\) is inert and there is a homotopy equivalence \(\Omega M\simeq\Omega Q\times \Omega(\Omega Q\ltimes N_{n-1})\). By condition \textit{(iii)}, we therefore have
\begin{equation}\label{eqn:generalthm}
    \Omega M\simeq\Omega P\times \Omega(\Omega P\ltimes N_{n-1}).
\end{equation}
Now consider the connected sum $N\#P$. There is a homotopy cofibration diagram
\begin{equation*}
    \begin{tikzcd}[row sep=3em, column sep = 3em]
        & N_{n-1} \arrow[r, equal] \arrow[d, hook] & N_{n-1}  \arrow[d] \\
        S^{n-1} \arrow[r, "f_N\check{+}f_P"] \arrow[d, equal] & P_{n-1}\vee N_{n-1} \arrow[r] \arrow[d, "p_1"] & N\#P \arrow[d] \\
        S^{n-1} \arrow[r, "f_P"] & P_{n-1} \arrow[r] & P.
    \end{tikzcd}
\end{equation*}
By \textit{(iv)}, Lemma \ref{lem:inertness} gives a homotopy equivalence \(\Omega(N\#P)\simeq\Omega P\times \Omega(\Omega P\ltimes N_{n-1})\) and consequently that \(\Omega M\simeq \Omega(N\#P)\), due to (\ref{eqn:generalthm}).
\end{proof}

\section{A Constructive Example}\label{sec:conex}

In order for Theorem \ref{thm:PDconn} to carry weight and not be vacuous, we should provide an example of a Poincar\'e Duality complex with the homotopy type of a connected sum after looping, but not before. Indeed, in this section we will give an explicit family of such examples. Letting \(n>2\) be an integer, we take \[w_1:S^{2n-1}\rightarrow S^n\vee S^n\text{\;, \;}w_2:S^{2n-1}\rightarrow S^{n-1}\vee S^{n+1}\text{\; and \;}w_3:S^{2n-2}\rightarrow S^n\vee S^{n-1}\] to be the Whitehead products attaching the top-dimensional cells of the sphere products \(S^n\times S^n\), \(S^{n-1}\times S^{n+1}\) and \(S^n\times  S^{n-1}\), respectively. Furthermore, let \(\eta\) denote the classical Hopf map and let \(r=2n-4\). We form three composites, namely 
\begin{gather*}
    f_1:S^{2n-1}\xrightarrow{w_1}S^n\vee S^n\xhookrightarrow{i_{1,2}} S^n\vee S^n\vee S^{n-1} \vee S^{n+1} \\
    f_2:S^{2n-1}\xrightarrow{w_2} S^{n-1}\vee S^{n+1}\xhookrightarrow{i_{3,4}} S^n\vee S^n\vee S^{n-1} \vee S^{n+1} \\
    f_3:S^{2n-1}\xrightarrow{\Sigma^r\eta}S^{2n-2}\xrightarrow{w_3}S^n\vee S^{n-1}\xhookrightarrow{i_{2,3}} S^n\vee S^n\vee S^{n-1} \vee S^{n+1}
\end{gather*}
where  \(i_{j,k}\) is the inclusion of the \(j^{th}\) and \(k^{th}\) sphere. Before continuing with our construction, we will verify that the composite \(w_3\circ\Sigma^r\eta\) is essential (i.e. not null homotopic). To see this, we will study adjoints. In what follows, we let \(E\) denote the usual suspension map \(X\rightarrow \Omega \Sigma X\), and for a map of spaces \(f:\Sigma X\rightarrow Y\) we denote its adjoint by \(\hat{f}:X\rightarrow \Omega Y\). Recall that such a map \(f\) is essential if and only if \(\hat{f}\) is, and that \(\hat{f}\) is homotopic to the composite \[X\xrightarrow{E}\Omega\Sigma X\xrightarrow{\Omega f}\Omega Y.\] 

\begin{lem}\label{lem:ess}
    The composite \(w_3\circ\Sigma^r\eta\) is essential.
\end{lem}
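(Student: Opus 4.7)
The plan is to follow the route indicated by the paper: pass to adjoints and detect the resulting map via a stable splitting of a loop space. First, writing $\Sigma^r\eta = \Sigma(\Sigma^{r-1}\eta)$ and using naturality of the loop--suspension adjunction (together with the identity $\Omega\Sigma h \circ E \simeq E\circ h$), the adjoint of $w_3 \circ \Sigma^r \eta$ factors as
\[
\widehat{w_3 \circ \Sigma^r \eta} \;\simeq\; \hat{w_3} \circ \Sigma^{r-1}\eta \;:\; S^{2n-2} \longrightarrow \Omega(S^n \vee S^{n-1}),
\]
where $\hat{w_3} : S^{2n-3} \to \Omega(S^n \vee S^{n-1})$ is the Samelson product of the canonical wedge inclusions. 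Since a map is essential if and only if its adjoint is, it suffices to prove that this composite adjoint is essential.

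Next, I would suspend once more and apply James's splitting $\Sigma \Omega \Sigma Z \simeq \bigvee_{k \geq 1} \Sigma Z^{\wedge k}$ with $Z = S^{n-1} \vee S^{n-2}$, so that $\Sigma Z = S^n \vee S^{n-1}$. Expanding the length--two part yields wedge summands including $\Sigma(S^{n-1} \wedge S^{n-2}) = S^{2n-2}$. The key geometric input is that $\Sigma \hat{w_3} : S^{2n-2} \to \Sigma \Omega(S^n \vee S^{n-1})$ is, up to a unit, the inclusion of precisely this mixed summand; this is the familiar interpretation of Samelson products in the James--Hilton picture, and is what makes $w_3$ a canonical generator of its Hilton--Milnor summand.

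With that identification, post-composing $\Sigma(\hat{w_3}\circ \Sigma^{r-1}\eta) = \Sigma \hat{w_3} \circ \Sigma^r \eta$ with projection onto this $S^{2n-2}$ summand returns, up to equivalence, the map $\Sigma^{r}\eta = \Sigma^{2n-4}\eta : S^{2n-1} \to S^{2n-2}$. Since $n > 2$ we have $2n-4 \geq 1$, placing us in the stable range, and $\Sigma^{2n-4}\eta$ represents the generator of $\pi_1^s \cong \Z/2$, which is nonzero. Hence $\Sigma(\hat{w_3}\circ \Sigma^{r-1}\eta)$ is essential, so $\hat{w_3}\circ \Sigma^{r-1}\eta$ is essential (as nullhomotopy is preserved by suspension), and consequently so is $w_3 \circ \Sigma^r \eta$.

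The main obstacle I anticipate is making the identification of $\Sigma \hat{w_3}$ with the inclusion of the correct James summand rigorous; depending on how much of the James--Hilton machinery one wishes to import, it may be cleaner simply to invoke Hilton's theorem directly. Hilton's theorem exhibits $\pi_{2n-1}(S^n \vee S^{n-1})$ as a direct sum indexed by basic Whitehead products, inside which $w_3 \circ (-) : \pi_{2n-1}(S^{2n-2}) \hookrightarrow \pi_{2n-1}(S^n \vee S^{n-1})$ is the inclusion of the summand corresponding to $[i_1,i_2]$, reducing the essentiality of $w_3 \circ \Sigma^r\eta$ to the well-known nontriviality of $\Sigma^{2n-4}\eta$.
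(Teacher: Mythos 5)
Your proposal is correct and takes essentially the same route as the paper: factor the adjoint as $\hat{w_3}\circ\Sigma^{r-1}\eta$ by naturality of $E$, then detect it via the Hilton summand $S^{2n-2}$ corresponding to the Whitehead product $[i_1,i_2]$, reducing to the non-triviality of a suspension of $\eta$. The only cosmetic difference is that the paper projects directly onto the $\Omega S^{2n-2}$ factor of the Hilton--Milnor product decomposition of $\Omega(S^n\vee S^{n-1})$ (this is precisely your ``invoke Hilton's theorem directly'' fallback), whereas your primary route suspends once more and uses the James splitting; both are valid and equivalent in substance.
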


\begin{proof}
In what follows, let \(\gamma=w_3\circ\Sigma^r\eta\). We will describe its adjoint \(\hat{\gamma}\) to show it is essential. We first consider the following diagram
    \begin{equation}\label{dgm:adjoints}
        \begin{tikzcd}[row sep=3em, column sep = 3em]
            S^{2n-2} \arrow[r, "\Sigma^{r-1}\eta"] \arrow[d, "E"] & S^{2n-3} \arrow[r, "\widehat{w_3}"] \arrow[d, "E"] & \Omega(S^n\vee S^{n-1}) \arrow[d, equals] \\
            \Omega S^{2n-1} \arrow[r, "\Omega\Sigma^r\eta"] \arrow[rr, bend right=20, "\Omega\gamma"] & \Omega S^{2n-2} \arrow[r, "\Omega w_3"] & \Omega(S^n\vee S^{n-1})
        \end{tikzcd} 
    \end{equation}
in which right square commutes by definition of the adjoint, the left square homotopy commutes by the naturality of the suspension map \(E\), and the bottom row is the definition of \(\Omega\gamma\). Thus the whole diagram commutes up to homotopy, and in particular we have \(\hat{\gamma}\simeq\hat{w}\circ\Sigma^{r-1}\eta\). Now, by the Hilton-Milnor Theorem we have a homotopy equivalence \[\Omega(S^n\vee S^{n-1})\simeq \prod_{k,l\in\N} \Omega S^{(k+l)n+(1-k-2l)}\] in which \(k=l=1\) gives a factor of \(\Omega S^{2n-2}\) and the composite \[\Omega S^{2n-2}\hookrightarrow \prod_{k,l\in\N} \Omega S^{(k+l)n+(1-k-2l)}\xrightarrow{\simeq}\Omega(S^n\vee S^{n-1})\] is homotopic to \(\Omega w_3\). Letting \(p:\Omega(S^n\vee S^{n-1})\rightarrow \Omega S^{2n-2}\) be the projection map from the product to this factor, consider the diagram
    \begin{equation}\label{dgm:adjoints2}
        \begin{tikzcd}[row sep=3em, column sep = 3em]
            S^{2n-2} \arrow[d, "\Sigma^{r-1}\eta"] \arrow[r, "\hat{\gamma}"]& \Omega(S^n\vee S^{n-1}) \arrow[r, "p"] & \Omega S^{2n-2} \\
            S^{2n-3} \arrow[ur, "\widehat{w_3}"] \arrow[r, "E"] & \Omega S^{2n-2}. \arrow[u, "\Omega w_3"] \arrow[ur, equals]
        \end{tikzcd} 
    \end{equation}
In particular, the right-hand triangle commutes, so the whole diagram commutes up to homotopy. To check that \(\hat{\gamma}\) is essential, it therefore it suffices to check that we have \(\Sigma^{r-1}\eta\) is not null homotopic, which always holds since \(r>0\).
\end{proof}

We return now to the main content of this long-form example. Let \(\sigma\) be the usual comultiplication on \(S^{2n-1}\) and let \(\nabla\) denote the fold map; we write \[\sigma'=(1\vee\sigma)\circ\sigma\text{\; and \;}\nabla'=\nabla\circ(1\vee\nabla)\] for three-fold uses of these maps. Consider now the composite \[f:S^{2n-1}\xrightarrow{\sigma'}\bigvee^3S^{2n-1}\xrightarrow{f_1\vee f_2\vee f_3} \bigvee^3(S^n\vee S^n\vee S^{n-1} \vee S^{n+1})\xrightarrow{\nabla'}S^n\vee S^n\vee S^{n-1} \vee S^{n+1}\] Letting \(p_{j,k}\) be the pinch map to the \(j^{th}\) and \(k^{th}\) sphere in \(S^n\vee S^n\vee S^{n-1} \vee S^{n+1}\) note that we have
\begin{equation}\label{eg:fs}
    p_{1,2}\circ f\simeq w_1\text{\;, \;}p_{3,4}\circ f\simeq w_2\text{\; and \;}p_{2,3}\circ f\simeq\gamma.
\end{equation}
The complex we wish to study is the homotopy cofibre of the map \(f\), which we shall denote by \(\mathcal{M}_n\). The construction of the map \(f\) implies that \(\mathcal{M}_n\) does not have the homotopy type of a connected sum before looping; if this were true, \(f\) would be representable in the form \(f\simeq g\check{+}h\). This is not the case, by virtue of Lemma \ref{lem:ess}, otherwise we would have \(\mathcal{M}_n\simeq(S^n\times S^n)\#(S^{n-1}\times S^{n+1})\). After looping, however, we have the following homotopy equivalence.

\begin{lem}\label{lem:pdnonconsumloop}
    There is a homotopy equivalence \(\Omega \mathcal{M}_n\simeq\Omega((S^n\times S^n)\#(S^{n-1}\times S^{n+1}))\).
\end{lem}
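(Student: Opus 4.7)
The plan is to apply Theorem \ref{1thm:inertness} to the homotopy cofibration defining \(\mathcal{M}_n\) and then to compare the resulting loop space splitting with the decomposition of \(\Omega((S^n\times S^n)\#(S^{n-1}\times S^{n+1}))\) produced by Proposition \ref{prop:PDconnsum}. Concretely, I regard the \((2n-1)\)-skeleton of \(\mathcal{M}_n\) as \(X\vee Y\) with \(X=S^n\vee S^n\) and \(Y=S^{n-1}\vee S^{n+1}\), so that \(\mathcal{M}_n\) is the homotopy cofibre of \(f\colon S^{2n-1}\to X\vee Y\). By the first identity in (\ref{eg:fs}), the pinch composite \(p_1\circ f\) is homotopic to the Whitehead product \(w_1\), whose cofibre is \(S^n\times S^n\). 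This places us precisely in the situation of Diagram (\ref{1dgm:setup1}) with \(M=S^n\times S^n\) and \(C=\mathcal{M}_n\).

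To invoke Theorem \ref{1thm:inertness}, I must verify that \(w_1\) is inert, that is, that the inclusion \(S^n\vee S^n\hookrightarrow S^n\times S^n\) induces a map of loop spaces admitting a right homotopy inverse. This is classical: using loop multiplication together with the two loop-level inclusions \(\Omega\iota_k\colon\Omega S^n\to\Omega(S^n\vee S^n)\), one constructs a map \(\Omega S^n\times\Omega S^n\to\Omega(S^n\vee S^n)\) whose composite with \(\Omega(S^n\vee S^n)\to\Omega(S^n\times S^n)\simeq\Omega S^n\times\Omega S^n\) is the identity, since the \(H\)-space structure on a product is componentwise. Granting this, Theorem \ref{1thm:inertness} provides a right homotopy inverse for \(\Omega\varphi\), where \(\varphi\colon\mathcal{M}_n\to S^n\times S^n\) is induced by the pushout square, and then Corollary \ref{1cor:splitfib} applied to the rightmost cofibration \(Y\hookrightarrow\mathcal{M}_n\xrightarrow{\varphi}S^n\times S^n\) yields
\begin{equation*}
    \Omega\mathcal{M}_n\simeq\Omega(S^n\times S^n)\times\Omega\bigl(\Omega(S^n\times S^n)\ltimes(S^{n-1}\vee S^{n+1})\bigr).
\end{equation*}

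Finally, I apply Proposition \ref{prop:PDconnsum} with \(M=S^n\times S^n\) (whose top-cell attaching map \(w_1\) is inert, as above) and \(N=S^{n-1}\times S^{n+1}\); since \(N_{2n-1}\simeq S^{n-1}\vee S^{n+1}\), this yields exactly the same decomposition for \(\Omega((S^n\times S^n)\#(S^{n-1}\times S^{n+1}))\), and direct comparison gives the desired equivalence. The main subtlety here is bookkeeping rather than computational: although \(\mathcal{M}_n\) is not being assumed to be a Poincar\'e Duality complex, the only input needed is the cofibration \(S^{2n-1}\xrightarrow{f}X\vee Y\to\mathcal{M}_n\) together with the identification \(p_1\circ f\simeq w_1\), both of which are immediate from the construction of \(f\).
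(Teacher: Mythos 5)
Your proof is correct, and it reaches the conclusion by a route that is logically equivalent to, but slightly differently packaged from, the paper's. Two differences are worth noting. First, you pinch onto $S^n\vee S^n$, so that $p_1\circ f\simeq w_1$ and the pushout space $M$ is $S^n\times S^n$; the paper instead pinches onto $S^{n-1}\vee S^{n+1}$ via $p_{3,4}$, obtaining $w_2$ and $M=S^{n-1}\times S^{n+1}$. Both Whitehead products are inert, so either choice works, and the connected sum is symmetric up to homotopy. Second, you unpack the argument into its constituent steps (Theorem \ref{1thm:inertness}, then Corollary \ref{1cor:splitfib}, then Proposition \ref{prop:PDconnsum} and comparison), whereas the paper simply cites Theorem \ref{thm:PDconn}, which is precisely the wrapper that bundles these steps together. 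What your version buys is a more transparent display of the mechanism: it makes visible that the right-homotopy-inverse for $\Omega\varphi$ comes from the inertness of $w_1$, and that the half-smash splitting is the common ground on which $\Omega\mathcal{M}_n$ and $\Omega((S^n\times S^n)\#(S^{n-1}\times S^{n+1}))$ are compared. The paper's version is terser and relies on the reader already having internalised Theorem \ref{thm:PDconn}. Neither is preferable on correctness grounds; yours would be the natural exposition if Theorem \ref{thm:PDconn} had not already been established.
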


\begin{proof}
In the context of Theorem \ref{thm:PDconn}, take \(N=S^n\times S^n\) and \(P=S^{n-1}\times S^{n+1}\). We have the following homotopy cofibration diagram
    \begin{equation}\label{dgm:nonconnsumexa}
        \begin{tikzcd}[row sep=1.5em, column sep = 1.5em]
            && S^n\vee S^n \arrow[rr, equal] \arrow[dd, hook] && S^n\vee S^n \arrow[dd] \\
            &&&&& \\
            S^{2n-1} \arrow[rr, "f"] \arrow[dd, equal] && S^n\vee S^n\vee S^{n-1} \vee S^{n+1} \arrow[rr] \arrow[dd, "p_{3,4}"] && \mathcal{M}_n \arrow[dd, "h"] \\            
            &&&&& \\
            S^{2n-1} \arrow[rr, "w_2"] && S^{n-1}\vee S^{n+1}  \arrow[rr] && S^{n-1}\times S^{n+1}
        \end{tikzcd} 
    \end{equation}    
from which, since the map \(w_2\) is inert, Theorem \ref{thm:PDconn} gives the homotopy equivalence.
\end{proof}

So, the complex \(\mathcal{M}_n\) is not a connected sum before looping, but does have the loop space homotopy type of one. Our construction also carries a deeper structure, as the following lemma shows.

\begin{lem}\label{lem:pdnonconsumpd}
    \(\mathcal{M}_n\) is a Poincar\'e Duality complex.
\end{lem}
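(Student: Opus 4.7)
The plan is to verify Poincar\'e duality directly by computing the integral cohomology ring of \(\mathcal{M}_n\) and checking that all cup product pairings in complementary degrees are perfect. The Puppe sequence of the defining cofibration yields free abelian cohomology with generators \(1,\, y,\, x_1,\, x_2,\, z,\, t\) in degrees \(0,\, n-1,\, n,\, n,\, n+1,\, 2n\), and all other groups vanishing; indeed the relevant connecting map \(f^*\) in degree \(2n-1\) is zero since the wedge \(S^n\vee S^n\vee S^{n-1}\vee S^{n+1}\) has no cohomology in degree \(2n-1\).

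To compute the cup products in degree \(2n\), I would use the pinch identities (\ref{eg:fs}) to produce maps of mapping cones from \(\mathcal{M}_n\) to \(S^n\times S^n\), \(S^{n-1}\times S^{n+1}\), and the cofibre \(C_\gamma\) of the third composite. Naturality of cup products applied to the first two maps yields \(x_1\cup x_2 = \pm t\) and \(y\cup z = \pm t\), by comparison with the classical ring structures of the two sphere products. For the self-cup products, I would show that every one-summand pinch \(p_j\circ f\colon S^{2n-1}\to S_j\) is null homotopic: each of \(w_1,\, w_2,\, w_3\) is a Whitehead product and therefore becomes null when either of its wedge summands is pinched out, and since \(f\) is assembled from these three via \(\sigma'\) and \(\nabla'\), naturality forces \(p_j\circ f\simeq *\). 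Consequently the mapping cone of \(p_j\circ f\) splits as \(S^{n_j}\vee S^{2n}\), giving a retraction \(\mathcal{M}_n\to S^{n_j}\vee S^{2n}\) through which each \(a\in\{x_1, x_2, y, z\}\) pulls back to a generator of a wedge summand. This forces \(x_1^2=x_2^2=y^2=z^2=0\). All other potentially nontrivial cross products vanish automatically by dimension, since \(H^{2n-1}(\mathcal{M}_n)=H^{2n+1}(\mathcal{M}_n)=0\).

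Collecting these calculations, the pairing \(H^n\otimes H^n\to H^{2n}\) has matrix \(\left(\begin{smallmatrix} 0 & 1 \\ \pm 1 & 0 \end{smallmatrix}\right)\), which is nondegenerate, and the pairing \(H^{n-1}\otimes H^{n+1}\to H^{2n}\) is multiplication by \(\pm 1\). Together with the tautological pairing in the top degree, this shows that cup product is a perfect pairing in every degree, so \(\mathcal{M}_n\) satisfies the Poincar\'e duality condition of Wall. The main step requiring care is verifying that the \(\Sigma^r\eta\) factor appearing in \(\gamma\) does not interfere with the vanishing of the self-cup products, but this is cleanly handled by the identity \(p_j\circ(w_3\circ\Sigma^r\eta)=(p_j\circ w_3)\circ\Sigma^r\eta\simeq *\), since \(w_3\) is itself a Whitehead product.
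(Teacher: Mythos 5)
Your proof is correct and rests on the same central idea as the paper's: use the pinch identities in (\ref{eg:fs}) to produce ring maps from \(H^*\) of the sphere products (and of \(C_\gamma\)) to \(H^*(\mathcal{M}_n)\), and read off the cup product structure by naturality. The packaging differs: the paper assembles these facts into an explicit algebra isomorphism \(H^*(\mathcal{M}_n)\cong H^*((S^n\times S^n)\#(S^{n-1}\times S^{n+1}))\) and invokes Poincar\'e duality of the connected sum, whereas you verify directly that the cup product pairings in complementary degrees are perfect. Both are valid routes to Wall's criterion.

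One place where you are actually more careful than the paper is the self-cup products. The paper asserts that \(y_1\cup y_2\) and \(y_3\cup y_4\) are ``the only possible cup products for dimensional reasons,'' but the squares \(y_1^2, y_2^2\) live in degree \(2n\) (and for \(n=3\) also \(y_3^2\) lives in degree \(n+1\)), so dimension alone does not rule them out. Your one-summand-pinch argument handles this correctly: each \(p_j\circ f\) is null because the three Whitehead products die under pinching to a single sphere and the remaining summands are killed by the inclusion-then-pinch being null, so \(\mathcal{M}_n\) maps to \(S^{n_j}\vee S^{2n}\), whose trivial ring structure forces the square to vanish. It is worth noting, though, that this detour is avoidable: the maps \(h'\colon\mathcal{M}_n\to S^n\times S^n\) and \(h\colon\mathcal{M}_n\to S^{n-1}\times S^{n+1}\) that you already construct induce ring homomorphisms sending generators to generators, and since \(x_1^2=x_2^2=0\) in \(H^*(S^n\times S^n)\) (being pulled back from the factors) and \(x_3^2=0\) in \(H^*(S^{n-1}\times S^{n+1})\), one gets \(y_1^2=y_2^2=y_3^2=0\) immediately by naturality, with no need to revisit the Whitehead products. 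So your argument is sound but slightly longer than necessary on this point.
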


\begin{proof}
    We will show that \(\mathcal{M}_n\) inherits a Poincar\'e Duality structure via an algebra isomorphism \(H^*(\mathcal{M}_n)\cong H^*((S^n\times S^n)\#(S^{n-1}\times S^{n+1}))\). To begin, consider taking the pinch map \(p_{1,2}\) or \(p_{2,3}\) in the middle-column of Diagram (\ref{dgm:nonconnsumexa}), instead of \(p_{3,4}\). Via (\ref{eg:fs}) we therefore have, in addition to the map \(h:\mathcal{M}_n\rightarrow S^{n-1}\times S^{n+1}\), maps \[h':\mathcal{M}_n\rightarrow S^n\times S^n\text{\; and \;}h'':\mathcal{M}_n\rightarrow C_\gamma\] where \(C_\gamma\) denotes the homotopy cofibre of \(\gamma\). It has cells in dimensions 0, \(n-1\), \(n\) and \(2n\), so all cup products in \(H^*(C_\gamma)\) are trivial. Next we must establish notation: we write \[H^*(S^n\times S^n)=\Z\langle x_1,x_2,\mu_1\rangle\text{\; and \;}H^*(S^{n-1}\times S^{n+1})=\Z\langle x_3,x_4,\mu_2\rangle\] where \(|x_1|=|x_2|=n\), \(|x_3|=n-1\), \(|x_4|=n+1\), \(|\mu_1|=|\mu_2|=2n\), and we have cup products \(x_1\cup x_2=\mu_1\) and \(x_3\cup x_4=\mu_2\). We shall do similarly for \(\mathcal{M}_n\): it has cells in dimensions 0, \(n-1\), \(n\), \(n+1\) and \(2n\), so may write \(H^*(\mathcal{M}_n)=\Z\langle y_1, y_2, y_3, y_4, \mu_{\mathcal{M}_n}\rangle\) where \(|y_1|=|y_2|=n\), \(|y_3|=n-1\), \(|y_4|=n+1\), \(|\mu_{\mathcal{M}_n}|=2n\). By construction, the induced homomorphisms \(h^*\) and \((h')^*\) on cohomology take generators to generators; explicitly, we have 
    \begin{gather*}
    h^*(x_1)=y_1\text{, }h^*(x_2)=y_2\text{, }(h')^*(x_3)=y_3\text{, }(h')^*(x_4)=y_4 
    \\ \text{ and }h^*(\mu_1)=(h')^*(\mu_2)=\mu_{\mathcal{M}_n}
    \end{gather*} 
    which in turn induces cup products \(y_1\cup y_2=y_3\cup y_4=\mu_{\mathcal{M}_n}\), which are the only possible cup products for dimensional reasons. Thus there is a clear algebra isomorphism \(H^*(\mathcal{M}_n)\cong H^*((S^n\times S^n)\#(S^{n-1}\times S^{n+1}))\) by virtue of which \(\mathcal{M}_n\) gains a Poincar\'e Duality structure. 
\end{proof}

Lemmas \ref{lem:pdnonconsumloop} and \ref{lem:pdnonconsumpd} combine to give the following Proposition, constituting the thrust of this section. 

\begin{prop}\label{prop:pdnonconsum}
    For each integer \(n>2\) there exists a \(2n\)-dimensional Poincar\'e Duality complex \(\mathcal{M}_n\) with the homotopy type of a connected sum after looping, but not before. \hfill \qed  
\end{prop}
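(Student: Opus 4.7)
The plan is essentially a synthesis of the two preceding lemmas. By construction, \(\mathcal{M}_n\) is the mapping cone of \(f:S^{2n-1}\to S^n\vee S^n\vee S^{n-1}\vee S^{n+1}\), hence a \(2n\)-dimensional CW complex. First I would invoke Lemma \ref{lem:pdnonconsumpd} to endow \(\mathcal{M}_n\) with Poincar\'e Duality structure, and then Lemma \ref{lem:pdnonconsumloop} to exhibit the homotopy equivalence \(\Omega\mathcal{M}_n\simeq\Omega((S^n\times S^n)\#(S^{n-1}\times S^{n+1}))\), which shows that \(\mathcal{M}_n\) has the loop space type of a connected sum.

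It then remains to confirm that \(\mathcal{M}_n\) is not itself a connected sum in a non-trivial sense. I would argue by contradiction, along the lines sketched in the paragraph preceding Lemma \ref{lem:pdnonconsumloop}: if \(\mathcal{M}_n\simeq X\#Y\) non-trivially, then \(f\simeq g\check{+}h\) with \(g, h\) attaching to disjoint wedge summands of the \((2n-1)\)-skeleton. Using the cup product structure computed in the proof of Lemma \ref{lem:pdnonconsumpd}, the only non-vanishing products below the top dimension are \(y_1\cup y_2\) and \(y_3\cup y_4\); applying Poincar\'e Duality within each of \(X\) and \(Y\) forces (up to swap) the wedge to split as \((S^n\vee S^n)\vee(S^{n-1}\vee S^{n+1})\), with attaching maps necessarily \(w_1\) and \(w_2\). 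Thus \(f\simeq w_1\check{+}w_2\), and then the pinch composite \(p_{2,3}\circ f\) is null homotopic, contradicting (\ref{eg:fs}) and Lemma \ref{lem:ess}, which together assert that \(p_{2,3}\circ f\simeq \gamma\) is essential.

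The main obstacle in the plan is the cohomological rigidity step: verifying that no other partition of the four bottom-dimensional spheres could give a non-trivial connected sum, and that the attaching maps \(w_1\) and \(w_2\) are the only realisations of \(S^n\times S^n\) and \(S^{n-1}\times S^{n+1}\) on the respective skeleta (up to homotopy of the attaching map). Both of these follow from a direct inspection of how Poincar\'e Duality constrains cup products between \((n-1)\)-, \(n\)- and \((n+1)\)-dimensional classes within a single summand, together with the fact that in the relevant dimensions the Whitehead product is (up to homotopy) the unique attaching map producing the required cup product structure.
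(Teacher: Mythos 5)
Your overall plan is the same as the paper's: cite Lemmas \ref{lem:pdnonconsumloop} and \ref{lem:pdnonconsumpd} for the positive assertions, and then argue by contradiction that a non-trivial connected sum decomposition of \(\mathcal{M}_n\) itself is impossible, using the essentiality of \(\gamma\) from Lemma \ref{lem:ess}. The paper is itself quite terse here, so you are on the right track.

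There is one inaccuracy worth repairing. You assert that the cohomological analysis forces \(f\simeq w_1\check{+}w_2\) on the nose, and then conclude that \(p_{2,3}\circ f\simeq *\). But the attaching map of the top cell of a Poincar\'e Duality complex whose cohomology \emph{ring} agrees with that of \(S^n\times S^n\) need not be \(w_1\): it is only forced to have Whitehead--Hopf invariant \(\pm 1\), and may differ from \(w_1\) by elements coming from \(\pi_{2n-1}(S^n)\) in each wedge summand (similarly for the other factor). So \(p_{2,3}\circ(g\check{+}h)\) need not be null homotopic. Fortunately, a weaker conclusion suffices and is what the argument really uses: after pinching, \(p_{2,3}\circ(g\check{+}h)\) is a sum \((i_1\circ p_2\circ g)+(i_2\circ p_3\circ h)\) whose Hilton--Milnor components lie entirely in the single-sphere factors \(\pi_{2n-1}(S^n)\oplus\pi_{2n-1}(S^{n-1})\) of \(\pi_{2n-1}(S^n\vee S^{n-1})\), whereas \(\gamma=w_3\circ\Sigma^r\eta\) lies in the Whitehead product factor \(\pi_{2n-1}(S^{2n-2})\) and is non-zero there by Lemma \ref{lem:ess}. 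Since the Hilton--Milnor decomposition is a direct sum, these two elements cannot agree, which gives the contradiction without pinning down \(g\) and \(h\) exactly. In short: you do not need your ``cohomological rigidity'' step in the strong form you state; the Hilton--Milnor bookkeeping you already set up in Lemma \ref{lem:ess} does the work, and removing the overclaim makes the proof cleaner as well as correct.
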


\begin{rem}
    Note that there was nothing special about our choice to use the Hopf map \(\eta\). Indeed, the construction that lead to Proposition \ref{prop:pdnonconsum} can be extended to a broader context in which we have more general wedges of spheres, if we have the following three ingredients. 
    \begin{enumerate}
        \item[(i)] we have two wedges of spheres \(W_1\) and \(W_2\), in which all spheres are of dimension at least 2, and for each \(i=1,2\) there are maps \(g_i:S^{2n-1}\rightarrow W_i\) such that their homotopy cofibres \(M_i\) are \(2n\)-dimensional Poincar\'e Duality complexes
        \item[(ii)]  integers \(a,b>1\) such that \(2n>a+b\) and that spheres \(S^a\) and \(S^b\) are wedge summands of \(W_1\) and \(W_2\), respectively.
        \item[(iii)] there is some non-trivial class \(\alpha\in\pi_{2n-1}(S^{a+b-1})\).
    \end{enumerate} 
    This produces a Poincar\'e Duality complex with the homotopy type of \(M_1\#M_2\) after looping, but not before. In principle, one could go further and seat this section in a more general context by considering maps of a sphere into a wedge, say \(f:S^n\rightarrow A\vee B\); in effect we have been using our knowledge of the Hilton-Milnor Theorem to provide the necessary clarity when \(A\) and \(B\) are wedges of spheres.
\end{rem}

\section{An Application to the Vigu\'e-Poirrier Conjecture}\label{sec:vp}

We briefly recall the framework of the Vigu\'e-Poirrier Conjecture. Letting \(X\) be a simply connected space, we denote its \textit{free loop space} by \(\calL X\). A simply connected space \(X\) is called \textit{rationally elliptic} if \(dim(\pi_*(X)\otimes\Q)<\infty\), and called\textit{ rationally hyperbolic} otherwise \cite{fht}. Vigu\'e-Poirrier made the following conjecture in \cite{vp}.

\begin{conj}[Vigu\'e-Poirrier]
If \(X\) is rationally hyperbolic, then \(H_*(\calL X;\Q)\) grows exponentially.
\end{conj}

Recall also that, by a result of Lambrechts proved that the Conjectures holds for non-trivial connected sums of certain closed manifolds \cite{lambrechtsbetti}. Now we establish some terminology, which we take chiefly from \cites{ht21, fhtloop}. 

\begin{defn}
    A graded vector space \(V=\lbrace V_i\rbrace_{i\geq0}\) of finite type \textit{grows exponentially} if there exist constants \(1<C_1<C_2<\infty\) such that for some \(K\) \[C_1^k\leq \sum_{i\leq k}dim(V_i)\leq C_2^k\text{ for all }k\geq K.\] The \textit{log-index} of \(V\) is defined by \[\text{log-index}(V)=\limsup\limits_{i}\frac{\ln(dim(V_i))}{i}.\] For a simply connected space \(X\), let \(\text{log-index}(\pi_*(X))=\text{log-index}(\pi_{\geq2}(X)\otimes\Q)\). 
\end{defn}

Note that if \(X\) is rationally elliptic then \(\text{log-index}(\pi_*(X))=-\infty\) and if \(X\) is rationally hyperbolic then \(\text{log-index}(\pi_*(X))>0\). The following two stronger definitions were formulated by F\'elix, Haperin and Thomas in \cite{fhtloop}, after which they then give Theorem \ref{thm:fht} (see \cite{fhtloop}*{Theorem 3}).

\begin{defn} A graded vector space \(V\) as described above has \textit{controlled exponential growth} if \(\text{log-index}(V)\in(0,\infty)\) and for each \(\lambda>1\) there is an infinite sequence \(n_1<n_2<\cdots\) such that \(n_{i+1}<\lambda n_i\) for all \(i\geq0\) and \(dim(V_{n_i})=e^{\alpha_i n_i}\) with \(\alpha_i\rightarrow\text{log-index}(V)\).
\end{defn}

\begin{defn} \label{def:geg}
Let \(X\) be a simply connected topological space with rational homology of finite type, and such that \(\text{log-index}(H_*(\Omega X;\Q))\in(0,\infty)\). Then \(\calL X\) has \textit{good exponential growth} if \(H_*(\calL X;\Q)\) has controlled exponential growth and \[\text{log-index}(H_*(\calL X;\Q))=\text{log-index}(H_*(\Omega X;\Q)).\]
\end{defn}

\begin{thm}[F\'elix-Haperin-Thomas]\label{thm:fht}
Let \(F\rightarrow Y\rightarrow Z\) be a fibration between simply connected spaces with rational homology of finite type. If \[\text{log-index}(\pi_*(Z))<\text{log-index}(\pi_*(Y))\] then \(\calL Y\) has good exponential growth if and only if \(\calL F\) does. \hfill \qed
\end{thm}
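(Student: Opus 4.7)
The plan is to exploit the fibration \(\calL F \to \calL Y \to \calL Z\) obtained by applying the unpointed mapping space functor \(\mathrm{Map}(S^1,-)\) to the given fibration \(F \to Y \to Z\); its fiber over the constant loop at the basepoint is indeed \(\calL F\). The hypothesis on \(\pi_*\) translates, via the Félix--Halperin identity \(\text{log-index}(H_*(\Omega X;\Q)) = \text{log-index}(\pi_*(X))\) (valid whenever \(X\) is rationally hyperbolic, and consistent with \(-\infty\) in the elliptic case), into a strict gap \(\text{log-index}(H_*(\Omega Z;\Q)) < \text{log-index}(H_*(\Omega Y;\Q))\).

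First, I would use the evaluation fibrations \(\Omega X \to \calL X \to X\) for each \(X\in\{Y,Z,F\}\) together with the at-most-polynomial growth of \(H_*(X;\Q)\) (a consequence of simple connectivity plus finite type of rational cohomology) to establish \(\text{log-index}(H_*(\calL X;\Q)) = \text{log-index}(H_*(\Omega X;\Q))\) in each case. Combined with the induced fibration \(\Omega F \to \Omega Y \to \Omega Z\) and the same log-index gap, this reduces the problem to comparing the growth of \(H_*(\calL Y;\Q)\) and \(H_*(\calL F;\Q)\) via the rational Serre spectral sequence of \(\calL F \to \calL Y \to \calL Z\), whose base has strictly smaller log-index than the total space.

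The core step is to analyze this spectral sequence. Its \(E^2\)-page is \(H_*(\calL Z;\Q) \otimes H_*(\calL F;\Q)\) (after verifying triviality of the local coefficient system, which follows by simple connectivity of \(\calL Z\) modulo a basepoint argument), whose log-index equals \(\text{log-index}(H_*(\calL F;\Q))\) on account of the gap. An upper-bound argument then forces \(\text{log-index}(H_*(\calL F;\Q)) \geq \text{log-index}(H_*(\calL Y;\Q))\); the edge homomorphism supplies the matching lower bound, so the two log-indices coincide.

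The more delicate task is the transfer of \emph{controlled} exponential growth between \(\calL Y\) and \(\calL F\). Given a witnessing sequence \(n_1<n_2<\cdots\) with \(n_{i+1}<\lambda n_i\) and \(\dim H_{n_i} = e^{\alpha_i n_i}\) on one side, the strict log-index gap implies that the base factor \(H_*(\calL Z;\Q)\) contributes exponentially fewer dimensions in each degree, so such a sequence transfers across the spectral sequence up to a sub-exponential correction that is absorbed by the condition \(\alpha_i \to \text{log-index}\). I expect this step---quantifying spectral-sequence filtration length against the exponential gap in every relevant degree---to be the principal obstacle, requiring care with how the witnessing sequences are constructed. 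A cleaner route, which I would ultimately pursue following \cite{fhtloop}, is to run the entire argument at the level of Sullivan minimal models: a model \((\Lambda V,d)\) for \(Y\) acquires a free-loop model \((\Lambda V\otimes \Lambda sV, D)\), the given fibration yields a relative Sullivan model, and the dimension-by-dimension growth comparison can then be extracted from the corresponding filtration of DG algebras without recourse to spectral sequences.
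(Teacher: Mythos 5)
The theorem you are trying to prove is not actually proved in the paper: it is stated with an attribution to F\'elix--Halperin--Thomas, a citation to \cite{fhtloop}*{Theorem 3}, and a terminal \(\square\) with no proof body. So there is no in-paper argument to compare against; the author is simply importing a black-box result. Your proposal is therefore an attempt to reconstruct an external proof, and it should be judged on its own merits.

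On those merits, the central step of your first paragraph is not sound as stated. You claim that for each \(X\in\{Y,Z,F\}\) one can ``establish \(\text{log-index}(H_*(\calL X;\Q)) = \text{log-index}(H_*(\Omega X;\Q))\)'' from the evaluation fibration \(\Omega X\to\calL X\to X\) and polynomial growth of \(H_*(X;\Q)\). But this equality is precisely one of the two defining clauses of \emph{good exponential growth} (Definition \ref{def:geg}); if it held automatically for all simply connected finite-type \(X\), the definition would be partially vacuous and the whole theorem would collapse to a statement about controlled growth alone. What the evaluation fibration's Serre spectral sequence gives you, when \(H_*(X;\Q)\) is bounded or polynomially bounded, is only the inequality \(\text{log-index}(H_*(\calL X;\Q)) \leq \text{log-index}(H_*(\Omega X;\Q))\); the reverse inequality can fail because the spectral-sequence differentials may erase the exponential contribution of \(\Omega X\), and the existing section given by constant loops only controls the base, not the fibre. (Also, ``at-most-polynomial growth of \(H_*(X;\Q)\)'' is not a consequence of simple connectivity plus finite type: finite type permits exponential rational Betti numbers for infinite-dimensional \(X\), and the theorem as stated does not assume \(X\) is a finite complex.) With this equality unavailable, the reduction in your second paragraph does not get off the ground. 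Your candid final paragraph --- defer to the Sullivan-model analysis of \cite{fhtloop} for the controlled-growth transfer --- is the honest assessment: the cited theorem rests on a nontrivial rational-homotopy argument that a spectral-sequence sketch of this kind does not reproduce, and that is consistent with the paper treating it as an external input rather than something to be re-derived.
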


We are now ready to provide our application using Theorem \ref{thm:PDconn}.

\begin{cor}\label{cor:vp}
Let $n>3$ and suppose that $M$, $N$ and $P$ are $n$-dimensional Poincar\'e Duality complexes that satisfy conditions (i)-(iv) of Theorem \ref{thm:PDconn}. If \[\text{log-index}(\pi_*(P))<\text{log-index}(\pi_*(N\#P))\] then \(\calL M\) has good exponential growth if and only if \(\calL(N\#P)\) does. 
\end{cor}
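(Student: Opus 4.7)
The plan is to apply Theorem \ref{thm:fht} to two homotopy fibrations that share a common fibre, one having total space $M$ and the other $N\#P$. First, Theorem \ref{thm:PDconn} under conditions (i)-(iv) gives $\Omega M \simeq \Omega(N\#P)$, while condition (iii) supplies $\Omega Q \simeq \Omega P$. Since $\pi_k(X) \cong \pi_{k-1}(\Omega X)$ for $k \geq 1$, these loop-space equivalences yield the log-index identifications
\[
\text{log-index}(\pi_*(M)) = \text{log-index}(\pi_*(N\#P))
\quad\text{and}\quad
\text{log-index}(\pi_*(Q)) = \text{log-index}(\pi_*(P)).
\]

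Next, I would construct two homotopy fibrations with the same fibre. Applying Corollary \ref{1cor:splitfib} to the right column $N_{n-1} \to M \to Q$ of Diagram (\ref{dgm:PDconn}) — its hypothesis that $\Omega(M \to Q)$ admits a right homotopy inverse was produced in the proof of Theorem \ref{thm:PDconn} via Theorem \ref{1thm:inertness} under condition (ii) — yields a homotopy fibration $\Omega Q \ltimes N_{n-1} \to M \to Q$, which condition (iii) rewrites as $\Omega P \ltimes N_{n-1} \to M \to Q$ (using that the half-smash depends only on the pointed homotopy types of its factors). In the same way, Corollary \ref{1cor:splitfib} applied to $N_{n-1} \to N\#P \to P$ — with hypothesis coming from condition (iv) via Lemma \ref{lem:inertness} — produces a homotopy fibration $\Omega P \ltimes N_{n-1} \to N\#P \to P$.

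Finally, I would apply Theorem \ref{thm:fht} to each fibration. For $\Omega P \ltimes N_{n-1} \to N\#P \to P$ the log-index hypothesis is precisely the standing assumption, so $\calL(N\#P)$ has good exponential growth if and only if $\calL(\Omega P \ltimes N_{n-1})$ does. For $\Omega P \ltimes N_{n-1} \to M \to Q$ the hypothesis $\text{log-index}(\pi_*(Q)) < \text{log-index}(\pi_*(M))$ follows from the assumption together with the log-index identifications above, so $\calL M$ has good exponential growth if and only if $\calL(\Omega P \ltimes N_{n-1})$ does. Chaining the two biconditionals gives the result. The main point to check carefully is that the right homotopy inverse of $\Omega(M \to Q)$ constructed inside the proof of Theorem \ref{thm:PDconn} is exactly what Corollary \ref{1cor:splitfib} requires; after that, the finite-type and simple-connectivity hypotheses of Theorem \ref{thm:fht} are routine to verify given $n > 3$.
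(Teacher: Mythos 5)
Your proposal is correct and follows essentially the same argument as the paper: construct the two homotopy fibrations $\Omega P\ltimes N_{n-1}\to M\to Q$ and $\Omega P\ltimes N_{n-1}\to N\#P\to P$ via Corollary \ref{1cor:splitfib}, establish the needed log-index inequalities from $\Omega M\simeq\Omega(N\#P)$ and $\Omega Q\simeq\Omega P$, and apply Theorem \ref{thm:fht} to each fibration before chaining the biconditionals. Your extra care about why the fibre may be rewritten as $\Omega P\ltimes N_{n-1}$ and where the right homotopy inverse of $\Omega h_1$ comes from is sound and matches what the paper leaves implicit.
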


\begin{proof}
Recall from the proof of Theorem \ref{thm:PDconn} that we had homotopy cofibrations \[N_{n-1}\rightarrow M\xrightarrow{h_1} Q\text{\; and \;}N_{n-1}\rightarrow N\#P\xrightarrow{h_2} P\] in which both \(h_1\) and \(h_2\) have right homotopy inverses after looping. In particular, by Corollary \ref{1cor:splitfib}, these give rise to homotopy fibrations 
\begin{equation}\label{eqn:fibs}
\Omega P\ltimes N_{n-1}\rightarrow M\xrightarrow{h_1} Q\text{\; and \;}\Omega P\ltimes N_{n-1}\rightarrow N\#P\xrightarrow{h_2} P.
\end{equation}
Taking the second of these homotopy fibrations, since we supposed that \[\text{log-index}(\pi_*(P))<\text{log-index}(\pi_*(N\#P))\] we have by Theorem \ref{thm:fht} that \(\calL(N\#P)\) has good exponential growth if and only if \(\calL(\Omega P\ltimes N_{n-1})\) does. Since we have assumption \textit{(iii)} of Theorem \ref{thm:PDconn}, \(\Omega Q\simeq \Omega P\). It is evident that \[\text{log-index}(\pi_*(P))=\text{log-index}(\pi_*(Q))\] and by the result Theorem \ref{thm:PDconn} of we have \[\text{log-index}(\pi_*(M))=\text{log-index}(\pi_*(N\#P)).\] Therefore \(\text{log-index}(\pi_*(Q))<\text{log-index}(\pi_*(M))\) and we may once again apply Theorem \ref{thm:fht} to the first homotopy fibration of (\ref{eqn:fibs}) and deduce that the good exponential growth of \(\calL(\Omega P\ltimes N_{n-1})\) is equivalent to good exponential growth of \(\calL M\). 
\end{proof}

Note in particular that if \(P\) is rationally elliptic, the assumption of Corollary \ref{cor:vp} always holds. We record this in the Theorem below, and then close with an example.

\begin{thm}\label{thm:vp}
    Let $n>3$ and let $M$ be a $n$-dimensional Poincar\'e Duality complexes, and suppose that there exist \(n\)-dimensional Poincar\'e Duality complexes \(N\) and \(P\) such that \(\Omega M\simeq\Omega (N\#P)\), satisfying conditions (i)-(iv) of Theorem \ref{thm:PDconn}. If \(P\) is rationally elliptic, then \(\calL M\) has good exponential growth if and only if \(\calL(N\#P)\) does. \qed
\end{thm}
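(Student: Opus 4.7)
The plan is to deduce Theorem \ref{thm:vp} directly from Corollary \ref{cor:vp}; the only gap to close is that the rational ellipticity of \(P\) either forces the strict log-index inequality demanded by that corollary, or else trivialises both sides of the claimed biconditional.

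First I would observe that when \(P\) is rationally elliptic, \(\pi_{\geq 2}(P)\otimes\Q\) is finite-dimensional, so \(\text{log-index}(\pi_*(P))=-\infty\). I then split according to the rational dichotomy for \(N\#P\). In the case when \(N\#P\) is rationally hyperbolic, one has \(\text{log-index}(\pi_*(N\#P))>0\), so the strict inequality \(\text{log-index}(\pi_*(P))<\text{log-index}(\pi_*(N\#P))\) holds automatically. Conditions (i)--(iv) of Theorem \ref{thm:PDconn} are part of our standing hypothesis, so Corollary \ref{cor:vp} applies verbatim and delivers the desired equivalence of good exponential growth for \(\calL M\) and \(\calL(N\#P)\).

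In the remaining case \(N\#P\) is rationally elliptic, and because \(\Omega M\simeq\Omega(N\#P)\) induces \(\pi_i(M)\otimes\Q\cong\pi_i(N\#P)\otimes\Q\) for all \(i\geq 2\), the space \(M\) is rationally elliptic as well. Consequently \(H_*(\Omega M;\Q)\cong H_*(\Omega(N\#P);\Q)\) has at most polynomial growth, so its log-index lies in \(\{-\infty,0\}\). Since Definition \ref{def:geg} insists on \(\text{log-index}(H_*(\Omega X;\Q))\in(0,\infty)\) as a precondition for good exponential growth of \(\calL X\), neither \(\calL M\) nor \(\calL(N\#P)\) can satisfy it, and the biconditional holds vacuously.

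There is no real technical obstacle: the argument is essentially bookkeeping on top of Corollary \ref{cor:vp}. The only subtle point worth flagging is that Definition \ref{def:geg} tacitly bakes in rational hyperbolicity of the ambient space, which is what allows the elliptic degenerate case to be dispatched cleanly and lets Theorem \ref{thm:vp} be stated as an unqualified equivalence rather than a one-sided implication.
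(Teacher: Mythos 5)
Your proof is correct and follows essentially the same route as the paper, which deduces Theorem~\ref{thm:vp} from Corollary~\ref{cor:vp} via the observation that rational ellipticity of \(P\) forces \(\text{log-index}(\pi_*(P))=-\infty\). The paper states flatly that in this case ``the assumption of Corollary~\ref{cor:vp} always holds,'' but as you rightly observe that is literally false when \(N\#P\) is itself rationally elliptic (both log-indices are then \(-\infty\), and \(-\infty<-\infty\) fails). Your resolution of that degenerate case is sound: when \(N\#P\) is rationally elliptic, so is \(M\) (their rational homotopy groups agree in degrees \(\geq 2\) since \(\Omega M\simeq\Omega(N\#P)\)), the loop-space homology of each has at most polynomial growth, and so the precondition \(\text{log-index}(H_*(\Omega X;\Q))\in(0,\infty)\) built into Definition~\ref{def:geg} fails on both sides, rendering the biconditional vacuous. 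In effect you have supplied the small piece of bookkeeping the paper elides, so your argument is slightly more careful than the paper's; the mathematical content is otherwise identical.
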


In the situation of Theorem \ref{thm:vp}, therefore, such an \(M\) satisfies the Vigu\'e-Poirrier Conjecture. Moreover, good exponential growth of \(\calL(N\#P)\) occurs in cases when \(N\) is rationally hyperbolic, or indeed when the cohomological condition of Lambrechts \cite{lambrechtsbetti} is satisfied. Such spaces also arise as pullback fibrations over connected sums (see for example \cite{chenfib}*{Theorem 4.2}). We close with a further example, using our construction of Section \ref{sec:conex}.

\begin{exa}
    Consider our Poincar\'e Duality complex \(\mathcal{M}_n\) from the main part of Section \ref{sec:conex}. We had the homotopy equivalence \[\Omega \mathcal{M}_n\simeq\Omega((S^n\times S^n)\#(S^{n-1}\times S^{n+1})).\] By \cite{fhtloop}*{Theorem 1}, \(\calL((S^n\times S^n)\#(S^{n-1}\times S^{n+1}))\) has good exponential growth. Moreover, though we only need one of them to be so, it is easily verified that both \(S^n\times S^n\) and \(S^{n-1}\times S^{n+1}\) are rationally elliptic. Thus Theorem \ref{thm:vp} applies, and \(\mathcal{M}_n\) satisfies the Vigu\'e-Poirrier Conjecture. 
\end{exa}

\bibliographystyle{amsplain}
\bibliography{bib}

\end{document}